\DeclareMathOperator{\arginf}{arg\,inf}
\DeclareMathOperator{\argmin}{arg\,min}
\newtheorem{theorem}{Theorem}
\newtheorem{assumption}{Assumption}
\newtheorem{corollary}{Corollary}
\newtheorem{problem}{Problem}
\newtheorem{lemma}{Lemma}
\newtheorem{definition}{Definition}
\newtheorem{proposition}{Proposition}
\title{Convergence of Dynamic Programming on the Semidefinite Cone}
\author{Donghwan Lee
\thanks{D. Lee is with the Department of Electrical Engineering,
KAIST, Daejeon, 34141, South Korea {\tt\small
donghwan@kaist.ac.kr}.}
}
\begin{document}

\maketitle

\begin{abstract}
The goal of this paper is to investigate new and simple convergence analysis of dynamic programming for linear quadratic regulator problem of discrete-time linear time-invariant systems. In particular, bounds on errors are given in terms of both matrix inequalities and matrix norm. Under a mild assumption on the initial parameter, we prove that the Q-value iteration exponentially converges to the optimal solution. Moreover, a global asymptotic convergence is also presented. These results are then extended to the policy iteration. We prove that in contrast to the Q-value iteration, the policy iteration always converges exponentially fast. An example is given to illustrate the results.
\end{abstract}
\begin{IEEEkeywords}
Dynamic programming, optimal control, convergence, linear time-invariant system, reinforcement learning
\end{IEEEkeywords}

\section{Introduction}
The optimal control theory for linear systems has a long tradition~\cite{bellman1965dynamic,bertsekas1996neuro,bertsekas2005dynamic,lewis2012optimal}. At the core of the optimal control problem is the dynamic programming: it offers a general and effective paradigm for finding optimal policies for the optimal control problem. Early works of dynamic programming~\cite{caines1970discrete,bertsekas1996neuro,bertsekas2005dynamic} clarified many issues, such as asymptotic convergence and precise conditions to guarantee the convergence. More recently, progresses have been made for nonlinear systems~\cite{heydari2014revisiting,al2008discrete} and switched linear systems~\cite{zhang2009value,zhang2011infinite,lincoln2006relaxing} to name just a few. A natural next step is to understand its non-asymptotic behavior: does the algorithm make a consistent and quantifiable progress toward the optimal solution? Although an exponential convergence has been established in~\cite{zhang2009value} under a stronger assumption on the weighting matrix, the question still remains unsettled.

Motivated by the discussions, in this paper, we revisit the classical results on the convergence analysis of dynamic programming in different angles for discrete-time linear time-invariant systems. In particular, the classical analysis usually focuses on the value function iteration. On the other hand, in this paper, we pay more attentions to the Q-function iteration, which is relevantly more popular in the field of reinforcement learning~\cite{sutton1998reinforcement,lewis2009reinforcement,bradtke1994adaptive}, in particular, Q-learning~\cite{watkins1992q,bradtke1994adaptive}. However, the proposed analysis can be directly applied to the value function-based dynamic programming as well. Most importantly, we study the convergence of Q-value iteration (Q-VI) and Q-function-based policy iteration (Q-PI) in terms of matrix inequality bounds and matrix norm. We prove that the error of Q-VI has an upper bound expressed in terms of matrix inequalities on the semidefinite cone, and the upper bounding matrix converges exponentially fast. On the other hand, its lower bound can be also expressed in terms of matrix inequalities, while the lower bound is proven to converge asymptotically. The overall convergence rate is dominated by the asymptotic behavior of the lower bound. However, it turns out that when the initial parameter lies on a certain semidefinite cone, the error matrix is upper and lower bounded by matrices that exponentially converge to zero. Therefore, under this scenario, an exponential convergence of the error can be derived in terms of some matrix norm. The validity of the results is demonstrated through an example. As a next step, the results for Q-VI are extended to the analysis of Q-PI. In particular, similar analysis can be applied to Q-PI except for one aspect. In contrast to Q-VI, Q-PI always guarantees exponential convergence independently of the initial parameters. This improvement comes from the additional initial information of Q-PI: the stabilizing gain initially given to Q-PI. We expect that the present work sheds new light on more exact analysis of dynamic programming with different angles, which inherit the simplicity and elegance.

{\bf Notation}: The adopted notation is as follows: ${\mathbb R}$: set of real numbers; ${\mathbb R}^n $: $n$-dimensional Euclidean
space; ${\mathbb R}^{n \times m}$: set of all $n \times m$ real
matrices; $A^T$: transpose of matrix $A$; $A^{-T}$: transpose of matrix $A^{-1}$; $A \succ 0$ ($A \prec
0$, $A\succeq 0$, and $A\preceq 0$, respectively): symmetric
positive definite (negative definite, positive semi-definite, and
negative semi-definite, respectively) matrix $A$; $I$: identity matrix with appropriate dimensions; ${\mathbb S} ^n $: symmetric $n \times
n$ matrices; ${\mathbb S}_+^n : = \{ P \in {\mathbb S}^n :P \succeq 0\}$; ${\mathbb S}_{++}^n : = \{ P \in {\mathbb S}^n :P \succ 0\}$; $\rho(\cdot)$: spectral radius; $\lambda _{\max } ( \cdot )$: maximum eigenvalue; $\lambda _{\min} ( \cdot )$: minimum eigenvalue.

\section{Problem Formulation and Preliminaries}\label{section:infinite-horizon-LQR-problem}

Consider the discrete-time linear time-invariant (LTI) system
\begin{align}
&x(k + 1) = Ax(k) + Bu(k),\quad x(0) = z \in {\mathbb
R}^n,\label{eq:LTI-system}
\end{align}
where the integer $k \geq 0$ is the time, $x(k) \in {\mathbb R}^n$ is the state
vector, $u(k) \in {\mathbb R}^m$ is the input vector, and $z \in
{\mathbb R}^n$ is the initial state. Assuming the input, $u(k)$, is given by a state-feedback control
policy, $u(k)=Fx(k)$, we denote by $x(k;F,z)$ the solution of
\eqref{eq:LTI-system} starting from $x(0)=z$. Under the
state-feedback control policy, the cost function for the classical
linear quadratic regulator (LQR) problem is denoted by
\begin{align*}
&J(F,z):= \sum_{k = 0}^\infty{\gamma^k \begin{bmatrix}
   x(k;F,z)\\
   Fx(k;F,z)\\
\end{bmatrix}^T \Lambda \begin{bmatrix}
   x(k;F,z)\\
   Fx(k;F,z)\\
\end{bmatrix}},
\end{align*}
where $\Lambda := \begin{bmatrix}
   Q & 0\\
   0 & R\\
\end{bmatrix}\succeq 0$ is the weight matrix and $\gamma \in (0,1]$ is called the discount factor.
By introducing the augmented state vector $v(k):=
\begin{bmatrix}
  x(k)\\
  u(k)\\
\end{bmatrix}$, we consider the augmented system throughout the paper
\begin{align}
&v(k+1)=A(F) v(k),\quad v(0) = v_0  \in {\mathbb R}^{n +
m},\label{eq:augmented-system}
\end{align}
where
\[
A(F): = \left[ {\begin{array}{*{20}c}
   A & B  \\
   {FA} & {FB}  \\
\end{array}} \right] \in {\mathbb R}^{(n + m) \times (n + m)}.
\]
If $v_0=\begin{bmatrix} z^T & z^T F^T \\
\end{bmatrix}^T$, then the state and input parts of $v(k)$ are
identical to $x(k)$ and $u(k)$ in~\eqref{eq:LTI-system}. A useful
property of $A(F)$ is that its spectral radius, $\rho (A(F))$, is
identical to that of $A+BF$.
\begin{lemma}[\cite{lee2018primal}]\label{lemma:spetral-radius-lemma}
$\rho(A+BF)= \rho (A(F))$ holds.
\end{lemma}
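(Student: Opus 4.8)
The plan is to exhibit a rank-type factorization of $A(F)$ that exposes $A+BF$ as the same product taken in the reverse order, and then invoke the classical fact that reversing the order of a matrix product leaves the nonzero spectrum unchanged. Concretely, I would write
\begin{align*}
A(F) = \begin{bmatrix} A & B \\ FA & FB \end{bmatrix} = \begin{bmatrix} I \\ F \end{bmatrix} \begin{bmatrix} A & B \end{bmatrix} =: GH,
\end{align*}
where $G := \begin{bmatrix} I \\ F \end{bmatrix} \in \mathbb{R}^{(n+m)\times n}$ and $H := \begin{bmatrix} A & B \end{bmatrix} \in \mathbb{R}^{n \times (n+m)}$. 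A block-by-block multiplication verifies this identity directly.

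The second step is to compute the product in the opposite order,
\begin{align*}
HG = \begin{bmatrix} A & B \end{bmatrix} \begin{bmatrix} I \\ F \end{bmatrix} = A + BF,
\end{align*}
so that $A+BF = HG$ while $A(F) = GH$. I would then recall the standard linear-algebra result that for any $G \in \mathbb{R}^{p\times q}$ and $H \in \mathbb{R}^{q\times p}$, the matrices $GH$ and $HG$ share the same nonzero eigenvalues with algebraic multiplicity; this follows, for instance, from the Sylvester-type determinant identity $\det(\lambda I - GH) = \lambda^{p-q}\det(\lambda I - HG)$ valid when $p \geq q$. Applying this with $p = n+m$ and $q = n$, the nonzero eigenvalues of $A(F)$ coincide exactly with the nonzero eigenvalues of $A+BF$; the only discrepancy is that $A(F)$, of size $(n+m)\times(n+m)$ but of rank at most $n$, carries $m$ additional eigenvalues equal to zero, consistent with the factor $\lambda^{m}$ above.

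The final step is to pass from the spectra to the spectral radii. Since the extra zero eigenvalues of $A(F)$ contribute nothing to the maximum modulus, and since any nonzero eigenvalue attaining the largest modulus of either matrix is automatically an eigenvalue of the other, the maximal modulus is identical for the two matrices; and if every eigenvalue of $A+BF$ happens to be zero, then so is every eigenvalue of $A(F)$, whence both radii vanish. In either case $\rho(A(F)) = \rho(A+BF)$, as claimed. I do not anticipate a genuine obstacle here: the whole argument hinges on spotting the factorization $A(F)=GH$, after which the conclusion is a one-line consequence of the $GH$/$HG$ spectral correspondence.
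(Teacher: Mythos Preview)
Your argument is correct: the factorization $A(F)=GH$ with $G=\begin{bmatrix}I\\F\end{bmatrix}$ and $H=\begin{bmatrix}A&B\end{bmatrix}$, together with the Sylvester determinant identity, shows that $A(F)$ and $A+BF$ share their nonzero spectrum and hence their spectral radius. The paper itself does not supply a proof of this lemma; it is quoted from \cite{lee2018primal}, so there is no in-paper argument to compare against, but your proposal is exactly the standard proof one would expect and would be a suitable justification here.
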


Define ${\cal F}_\gamma$ as the set of all stabilizing state-feedback
gains of system $(\gamma^{1/2}A, \gamma^{1/2}B)$, i.e., ${\cal F}_\gamma:= \{F \in {\mathbb
R}^{m \times n} :\rho(\gamma^{1/2} A+ \gamma^{1/2} BF)<1\}$. Then, ${\cal F}_\gamma$ is
an open set, not necessarily convex~\cite[Lemma~2]{geromel1998static}; however, finding a
state feedback gain $F\in {\cal F}_\gamma$ can be reduced to a simple
convex problem. Notice that with $\gamma = 1$, ${\cal F}_1$ is the set of all stabilizing state-feedback gains of $(A,B)$. From the standard LQR theory, although $J^*(F,z)$ has different
values for different $z \in {\mathbb R}^n$, the minimizer
$F^*=\argmin_{F \in {\mathbb R}^{m\times n}} J(F,z)$ is not dependent on $z$.
Based on this notion, the infinite-horizon LQR problem is formalized below.
\begin{problem}[Infinite-horizon LQR problem]\label{problem:infinite-horizon-LQR}
For any $z\in {\mathbb R}^n$, solve $F^*=\argmin_{F\in {\mathbb R}^{m\times n}} J(F,z)$ if the optimal value of $\inf_{F\in {\mathbb R}^{m\times n}} J(F,z)$ exists and is attained.
\end{problem}

For a given $z\in {\mathbb R}^n$, if the optimal
value of $\inf_{F \in {\mathbb R}^{m\times n}} J(F,z)$ exists and is attained,
then the optimal cost is denoted by $J^*(z)=J(F^*,z)$.
Assumptions that will be used throughout the paper are summarized
below.
\begin{assumption}\label{assumption:basic-assumption}
Throughout the paper, we assume that
\begin{itemize}
\item $Q \succeq 0,R \succ 0$;

\item $(A,B)$ is stabilizable and $Q$ can be written as $Q=C^T
C$, where $(A,C)$ is detectable.
\end{itemize}
\end{assumption}

Under~\cref{assumption:basic-assumption}, the optimal
value of $\inf_{F \in {\mathbb R}^{m\times n}} J(F,\,z)$ exists, is attained, and
$J^*(z)$ is a quadratic function, i.e., $J^*(z) = z^T X^* z$,
where $X^*$ is the unique solution of the algebraic Riccati
equation (ARE)~\cite[Proposition~4.4.1]{bertsekas2005dynamic} for $X$:
\begin{align*}
X =& \gamma A^T XA - \gamma A^T XB(R + \gamma B^T XB)^{-1} \gamma B^T XA + Q,\\
X\succeq& 0.
\end{align*}
In this case, $J^*(z)$ as a function of $z\in {\mathbb R}^n$ is
called the optimal value function, which satisfies the Bellman equation
\[
J^* (z) = \min _{w \in {\mathbb R}^m } \left\{ {\left[ {\begin{array}{*{20}c}
   z  \\
   w  \\
\end{array}} \right]^T \Lambda \left[ {\begin{array}{*{20}c}
   z  \\
   w  \\
\end{array}} \right] + \gamma J^* (Az + Bw)} \right\}
\]

The reader can refer to~\cite{bertsekas2005dynamic}
and~\cite{kwakernaak1972linear} for more details of the classical
LQR results. The corresponding optimal control policy is $u^* (z)
= F^* z$, where
\begin{align}
&F^*:=-(R + \gamma B^T X^* B)^{-1} \gamma B^T X^*A \in {\cal F}_\gamma \label{eq:F*}
\end{align}
is the unique optimal gain. Note that $F^*\in {\cal F}_\gamma$, i.e., it stabilizes $(\gamma^{1/2}A,\gamma^{1/2}B)$. Alternatively, the optimal $Q$-function~\cite{bertsekas2005dynamic} is defined as
\begin{align}
&Q^*(z,u):=z^T Qz + u^T Ru + \gamma J^*(Az+Bu)= \begin{bmatrix}
   z  \\
   u  \\
\end{bmatrix}^T P^* \begin{bmatrix}
   z  \\
   u  \\
\end{bmatrix},\label{eq:Q-factor}
\end{align}
where
\begin{align}
&P^*:=\begin{bmatrix}
   Q + \gamma A^T X^* A & \gamma A^T X^* B \\
   \gamma B^T X^*A & R + \gamma B^T X^*B  \\
\end{bmatrix}.\label{eq:P*}
\end{align}
Once the optimal Q-function is found, then the optimal policy can be expressed as
\begin{align*}
&u^*(z)=F^*z=\argmin_{u \in {\mathbb R}^m}Q^*(z,u).
\end{align*}

The optimal Q-function is known to satisfy the Q-Bellman equation
\[
Q^* (z,u) = \left[ {\begin{array}{*{20}c}
   z  \\
   u  \\
\end{array}} \right]^T \Lambda \left[ {\begin{array}{*{20}c}
   z  \\
   u  \\
\end{array}} \right] + \gamma \min _{w \in {\mathbb R}^m } Q^* (Az + Bu,w)
\]
and its parametric form is
\begin{align*}
P^* =& \left[ {\begin{array}{*{20}c}
   Q & 0  \\
   0 & R  \\
\end{array}} \right]\\
& + \gamma \left[ {\begin{array}{*{20}c}
   A & B  \\
\end{array}} \right]^T (P_{11}^*  - P_{12}^* (P_{22}^*)^{ - 1} (P_{12}^*)^T )\left[ {\begin{array}{*{20}c}
   A & B  \\
\end{array}} \right]
\end{align*}
or more compactly,
\begin{align}
P^*  = \Lambda  + \gamma A(F^* )^T P^* A(F^* ).\label{eq:5}
\end{align}
We close this section by introducing some additional definitions and lemma. Throughout the paper, we will use the partition $P=
\begin{bmatrix}
   P_{11} & P_{12}\\
   P_{12}^T & P_{22}\\
\end{bmatrix}$ for any matrix  $P \in {\mathbb S}^{n+m}$, where $P_{11}\in {\mathbb S}^n,P_{12} \in {\mathbb R}^{n
\times m},\,P_{22} \in {\mathbb S}^m$.
For convenience, we introduce the set
\[
{\cal P}: = \left\{ {\left[ {\begin{array}{*{20}c}
   {P_{11} } & {P_{12} }  \\
   {P_{12}^T } & {P_{22} }  \\
\end{array}} \right] \in {\mathbb S}_+^{n + m} :P_{22}  \in {\mathbb S}_{ +  + }^m } \right\}
\]

\begin{lemma}[\cite{lee2018primal}]\label{lemma:key-result}
For $P \in {\cal P}$, it holds that
\[
A(F)^T PA(F) \succeq A( - P_{22}^{ - 1} P_{12}^T )^T PA( - P_{22}^{ - 1} P_{12}^T ),\quad \forall F \in {\mathbb R}^{m\times n}
\]

\end{lemma}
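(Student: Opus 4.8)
The plan is to exploit the factorization $A(F) = \begin{bmatrix} I \\ F \end{bmatrix}\begin{bmatrix} A & B \end{bmatrix}$, which follows immediately from the definition of $A(F)$, and then to reduce the matrix inequality to a completion-of-squares argument in the variable $F$. Writing $M := \begin{bmatrix} A & B \end{bmatrix}$, the quadratic form becomes $A(F)^T P A(F) = M^T \Phi(F) M$, where $\Phi(F) := \begin{bmatrix} I & F^T \end{bmatrix} P \begin{bmatrix} I \\ F \end{bmatrix} = P_{11} + P_{12} F + F^T P_{12}^T + F^T P_{22} F$. This isolates the dependence on $F$ inside a single $n \times n$ symmetric block sandwiched by the fixed congruence $M$.

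Next I would complete the square in $F$. Since $P \in \mathcal{P}$ guarantees $P_{22} \in \mathbb{S}_{++}^m$, the inverse $P_{22}^{-1}$ exists, and a direct expansion verifies
\[
\Phi(F) = P_{11} - P_{12} P_{22}^{-1} P_{12}^T + (F + P_{22}^{-1} P_{12}^T)^T P_{22} (F + P_{22}^{-1} P_{12}^T).
\]
Because $P_{22} \succ 0$, the last term is positive semidefinite for every $F$ and vanishes precisely at $F = -P_{22}^{-1} P_{12}^T$. Therefore $\Phi(F) \succeq \Phi(-P_{22}^{-1} P_{12}^T) = P_{11} - P_{12} P_{22}^{-1} P_{12}^T$ for all $F \in \mathbb{R}^{m \times n}$, and crucially the minimizer coincides exactly with the gain $-P_{22}^{-1} P_{12}^T$ named in the statement.

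Finally I would lift this back through the congruence by $M$. Since $X \succeq Y$ implies $M^T X M \succeq M^T Y M$ for any conformable $M$, the inequality $\Phi(F) \succeq \Phi(-P_{22}^{-1} P_{12}^T)$ yields $M^T \Phi(F) M \succeq M^T \Phi(-P_{22}^{-1} P_{12}^T) M$, which is precisely $A(F)^T P A(F) \succeq A(-P_{22}^{-1} P_{12}^T)^T P A(-P_{22}^{-1} P_{12}^T)$, as claimed.

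There is no serious obstacle here: the argument is essentially the matrix-valued analogue of minimizing a scalar quadratic, and every step is an algebraic identity or a standard monotonicity property of the Loewner order. The only points requiring care are the invertibility of $P_{22}$, which is exactly what membership $P \in \mathcal{P}$ supplies, and the recognition that the completion-of-squares minimizer is the specific gain $-P_{22}^{-1} P_{12}^T$ rather than a generic argmin, so that the right-hand side of the stated inequality is attained and not merely bounded.
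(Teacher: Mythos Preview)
Your argument is correct. The factorization $A(F) = \begin{bmatrix} I \\ F \end{bmatrix}\begin{bmatrix} A & B \end{bmatrix}$, the completion of squares in $\Phi(F)$ using $P_{22} \succ 0$, and the final congruence by $M = \begin{bmatrix} A & B \end{bmatrix}$ are all verified line by line, and the invertibility hypothesis $P \in \mathcal{P}$ is used exactly where needed.

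As for comparison with the paper: the paper does not prove this lemma at all. It is stated with a citation to \cite{lee2018primal} and used as a black box throughout. Your completion-of-squares derivation is the natural proof one would expect in that reference, so there is nothing substantively different to report; you have simply supplied what the present paper omits.
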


\section{Value Iteration}

In this section, we provide analysis of Q-value iteration (Q-VI). Define the Bellman operator
\[
{\cal T}(P) = \Lambda  + \gamma A( - P_{22}^{ - 1} P_{12}^T )^T PA( - P_{22}^{ - 1} P_{12}^T )
\]
Note that the operator is well defined only for $P \in {\cal P}$ because in this case, $P_{22}^{ - 1}$ exists in the definition of $\cal T$. In this paper, for $P=0$, the operator is defined as ${\cal T}(P) = \Lambda$. Then, ${\cal T}(0)\in {\cal P}$ holds because $\Lambda \in {\cal P}$. Moreover, $P^* \in {\cal P}$ as well.
\begin{lemma}[Positiveness of $\cal T$]
For any $P\in {\cal P} \cup \{ 0\}$, ${\cal T}(P)\in {\cal P}$ holds.
\end{lemma}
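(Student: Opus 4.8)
The plan is to check the two defining properties of $\mathcal{P}$ for the matrix $\mathcal{T}(P)$: positive semidefiniteness, and positive definiteness of the lower-right $m\times m$ block. The case $P=0$ is already disposed of in the text, since $\mathcal{T}(0)=\Lambda$ has lower-right block $R\in\mathbb{S}_{++}^m$ and satisfies $\Lambda\succeq 0$; so I would concentrate on $P\in\mathcal{P}$.

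For positive semidefiniteness, set $F=-P_{22}^{-1}P_{12}^T$ so that $\mathcal{T}(P)=\Lambda+\gamma\,A(F)^T P A(F)$. Because $P\succeq 0$ gives $A(F)^T P A(F)\succeq 0$, and $\Lambda\succeq 0$ with $\gamma>0$, the sum is $\succeq 0$ immediately; this part needs nothing beyond closure of the semidefinite cone under congruence and nonnegative combinations.

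The real content is the lower-right block. First I would factor $A(F)=\begin{bmatrix}I\\F\end{bmatrix}\begin{bmatrix}A&B\end{bmatrix}$, which gives $A(F)^T P A(F)=\begin{bmatrix}A&B\end{bmatrix}^T M\begin{bmatrix}A&B\end{bmatrix}$ with $M:=\begin{bmatrix}I&F^T\end{bmatrix}P\begin{bmatrix}I\\F\end{bmatrix}$. Expanding $M$ and substituting $F=-P_{22}^{-1}P_{12}^T$, the cross terms cancel and $M$ collapses to the Schur complement $M=P_{11}-P_{12}P_{22}^{-1}P_{12}^T$. Since $P\succeq 0$ and $P_{22}\in\mathbb{S}_{++}^m$, the Schur complement criterion yields $M\succeq 0$. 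Reading off the $(2,2)$-block of $\mathcal{T}(P)$ then gives $[\mathcal{T}(P)]_{22}=R+\gamma\,B^T M B\succeq R\succ 0$, so this block lies in $\mathbb{S}_{++}^m$. Combined with $\mathcal{T}(P)\succeq 0$, this proves $\mathcal{T}(P)\in\mathcal{P}$.

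The one step requiring care is the algebraic reduction of $M$ to the Schur complement: it is precisely the choice of the minimizing gain $F=-P_{22}^{-1}P_{12}^T$ that produces this cancellation, and it is also what lets the strict positivity of $R$ carry over to the new lower-right block. I expect no genuine obstacle here, since everything ultimately rests on the Schur complement characterization together with the hypothesis $R\succ 0$.
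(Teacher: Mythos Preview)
Your proof is correct. The paper states this lemma without proof (the surrounding text only handles the trivial case $\mathcal{T}(0)=\Lambda\in\mathcal{P}$), so there is no argument to compare against; your write-up would serve as a complete proof.

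One simplification: the Schur-complement computation of $M$ is correct but unnecessary for the $(2,2)$-block. Since $P\succeq 0$ already gives $A(F)^T P A(F)\succeq 0$, every principal submatrix of $A(F)^T P A(F)$ is $\succeq 0$; in particular its lower-right $m\times m$ block is $\succeq 0$, and hence $[\mathcal{T}(P)]_{22}\succeq R\succ 0$ follows directly from $\mathcal{T}(P)\succeq\Lambda$. Your explicit identification $M=P_{11}-P_{12}P_{22}^{-1}P_{12}^T$ is a nice observation---it exhibits the link between $\mathcal{T}$ and the usual Riccati Schur complement---but the lemma itself does not require it.
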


Using this matrix operator, Q-VI can be briefly summarized as in~\cref{algo:value-iteration}.
\begin{algorithm}[h]
\caption{Q-Value Iteration (Q-VI)}
\begin{algorithmic}[1]
\State Initialize $P_0 \in {\mathbb S}_+^{n+m}$.
\For{$k \in \{0,1,\ldots\}$}

\State Update $P_{k + 1}  ={\cal T}(P_k)$

\EndFor

\end{algorithmic}\label{algo:value-iteration}
\end{algorithm}

As a first step toward our goal, an important property of $\cal T$ is its monotonicity.
\begin{lemma}[Monotonicity of $\cal T$]\label{lemma:3}
$\cal T$ is ${\mathbb S}^{n+m}_+$ monotone on ${\cal P} \cup \{ 0 \}$, i.e., for $P', P \in {\cal P} \cup \{ 0 \}$, $P' \preceq P$ implies ${\cal T}(P') \preceq {\cal T}(P)$.
\end{lemma}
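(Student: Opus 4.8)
The plan is to exploit the variational characterization of $\cal T$ that is implicit in \cref{lemma:key-result}. Writing $F_P := -P_{22}^{-1}P_{12}^T$ for the greedy gain associated with $P \in {\cal P}$, \cref{lemma:key-result} states precisely that $A(F_P)^T P A(F_P) \preceq A(F)^T P A(F)$ for every $F \in {\mathbb R}^{m\times n}$. Hence $F_P$ minimizes $A(F)^T P A(F)$ in the semidefinite order, and we may regard $\cal T$ as
\[
{\cal T}(P) = \Lambda + \gamma \min_{F \in {\mathbb R}^{m\times n}} A(F)^T P A(F),
\]
the minimum being attained at $F = F_P$. This reformulation is the crux; the monotonicity then follows from two elementary facts about the $\mathbb{S}^{n+m}_+$ order.

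First I would use suboptimality of $F_P$ as a test gain for $P'$: since the greedy gain $F_{P'}$ associated with $P'$ is the minimizer for $P'$, plugging in the (generally suboptimal) gain $F_P$ gives
\[
A(F_{P'})^T P' A(F_{P'}) \preceq A(F_P)^T P' A(F_P).
\]
Next I would invoke congruence invariance of the semidefinite order: because $P' \preceq P$, conjugating the fixed matrix $A(F_P)$ preserves the inequality, i.e. $A(F_P)^T(P-P')A(F_P) \succeq 0$, so that
\[
A(F_P)^T P' A(F_P) \preceq A(F_P)^T P A(F_P).
\]
Chaining these two inequalities, adding $\Lambda$ and scaling the quadratic term by $\gamma > 0$ yields
\[
{\cal T}(P') = \Lambda + \gamma A(F_{P'})^T P' A(F_{P'}) \preceq \Lambda + \gamma A(F_P)^T P A(F_P) = {\cal T}(P),
\]
which establishes the claim for $P', P \in {\cal P}$.

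It remains to dispose of the boundary cases involving $0$. If $P' = 0$, then ${\cal T}(0) = \Lambda \preceq \Lambda + \gamma A(F_P)^T P A(F_P) = {\cal T}(P)$, since $P \succeq 0$ forces the congruence $A(F_P)^T P A(F_P) \succeq 0$; and if $P = 0$, then $0 \preceq P' \preceq P = 0$ forces $P' = 0$, so the statement is trivial. I expect the only genuine subtlety to be confirming that \cref{lemma:key-result} really delivers the \emph{full} minimization property, rather than a comparison against a single alternative gain; once $\cal T$ is recognized as a pointwise (over $F$) minimum of congruence-monotone maps, the result is the standard observation that a minimum of monotone maps is monotone, and no explicit computation with the Riccati-type blocks is needed.
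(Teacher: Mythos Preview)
Your proposal is correct and is essentially the paper's own argument: both proofs combine congruence invariance of the semidefinite order ($P'\preceq P \Rightarrow A(F)^T P' A(F)\preceq A(F)^T P A(F)$) with \cref{lemma:key-result} (the greedy gain minimizes $A(F)^T P A(F)$), merely applying these two steps in the opposite order. Your explicit treatment of the boundary cases $P'=0$ and $P=0$ is a welcome addition that the paper's proof silently omits.
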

\begin{proof}
Suppose $\tilde P \preceq P$. Then,
\begin{align*}
{\cal T}(P) =& \Lambda  + \gamma A( - P_{22}^{ - 1} P_{12}^T )^T PA( - P_{22}^{ - 1} P_{12}^T )\\
 \succeq& \Lambda  + \gamma A( - P_{22}^{ - 1} P_{12}^T )^T \tilde PA( - P_{22}^{ - 1} P_{12}^T )\\
\succeq& \Lambda  + \gamma A( - \tilde P_{22}^{ - 1} \tilde P_{12}^T )^T \tilde PA( - \tilde P_{22}^{ - 1} \tilde P_{12}^T )\\
 = & {\cal T}(\tilde P)
\end{align*}
where the first inequality is due to the hypothesis, $\tilde P \preceq P$, and~\cref{lemma:key-result} is applied for the second inequality. This completes the proof.
\end{proof}

The analyze the convergence of~\cref{algo:value-iteration}, we now focus on the error matrix
\begin{align}
{\cal T}^k (P) - {\cal T}^k(P^*)= {\cal T}^k (P) - P^*.\label{eq:error}
\end{align}
In the following, an upper bound on the error is given in terms of the matrix bound on the semidefinite cone.
\begin{theorem}[Upper bound]\label{thm:1}
For any $P\in {\cal P} \cup \{ 0 \}$, we have
\[
{\cal T}^k (P) - P^* = {\cal T}^k (P) - {\cal T}^k(P^*) \preceq \gamma^k (A(F^*)^T )^k (P  - P^*)A(F^*)^k
\]
for all $k\geq 0$, and $\gamma^k (A(F^*)^T )^k (P  - P^*)A(F^*)^k \to 0$ as $k \to \infty$.
\end{theorem}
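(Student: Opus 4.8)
The displayed equality is immediate: $P^*$ is a fixed point of $\cal T$ by \eqref{eq:5}, so ${\cal T}^k(P^*) = P^*$ for every $k \geq 0$. The content is therefore the matrix inequality, and the plan is to prove it by induction on $k$, with the inductive step resting on a one-step estimate of the form
\[
{\cal T}(P) - P^* \preceq \gamma A(F^*)^T (P - P^*) A(F^*), \qquad P \in {\cal P} \cup \{0\}.
\]

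For this one-step estimate with $P \in \cal P$, write $F_P := -P_{22}^{-1}P_{12}^T$ for the greedy gain of $P$, so that ${\cal T}(P) = \Lambda + \gamma A(F_P)^T P A(F_P)$. The crucial move is to freeze the optimal gain and exploit that $F_P$ is a minimizer: applying \cref{lemma:key-result} with the specific choice $F = F^*$ yields $A(F^*)^T P A(F^*) \succeq A(F_P)^T P A(F_P)$, hence ${\cal T}(P) \preceq \Lambda + \gamma A(F^*)^T P A(F^*)$. Subtracting the fixed-point identity $P^* = \Lambda + \gamma A(F^*)^T P^* A(F^*)$ cancels $\Lambda$ and leaves precisely the asserted bound. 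For the remaining case $P = 0$, where $F_P$ is undefined and ${\cal T}(0) = \Lambda$ by convention, the estimate holds directly, and in fact with equality, since \eqref{eq:5} gives $\Lambda - P^* = -\gamma A(F^*)^T P^* A(F^*)$.

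The induction is then routine. The base case $k = 0$ holds with equality. For the step, I apply the one-step estimate to ${\cal T}^k(P)$, which is legitimate because the positiveness of $\cal T$ ensures ${\cal T}^k(P) \in \cal P$ for $k \geq 1$ (so $F_{{\cal T}^k(P)}$ is well defined), giving ${\cal T}^{k+1}(P) - P^* \preceq \gamma A(F^*)^T ({\cal T}^k(P) - P^*) A(F^*)$. Substituting the induction hypothesis and using that a congruence $X \mapsto A(F^*)^T X A(F^*)$ preserves the order $\preceq$ (as $Y \succeq 0$ implies $M^T Y M \succeq 0$) produces the bound with exponent $k+1$.

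Finally, to show the right-hand side vanishes, set $M := \gamma^{1/2} A(F^*)$ and note that $\gamma^k (A(F^*)^T)^k (P - P^*) A(F^*)^k = (M^k)^T (P - P^*) M^k$. Since $M$ equals the augmented matrix of the scaled pair $(\gamma^{1/2}A, \gamma^{1/2}B)$ at gain $F^*$, \cref{lemma:spetral-radius-lemma} gives $\rho(M) = \rho(\gamma^{1/2}A + \gamma^{1/2}BF^*) < 1$, the strict inequality holding because $F^* \in {\cal F}_\gamma$ by \eqref{eq:F*}. Hence $M^k \to 0$ and the product tends to $0$. The single delicate point is the asymmetric one-step comparison: rather than matching each operator to its own greedy gain, one must hold $F^*$ fixed and invoke the suboptimality of $F^*$ relative to $P$ through \cref{lemma:key-result}; the rest is bookkeeping, contingent only on verifying that the iterates remain in $\cal P$ so that $\cal T$ and \cref{lemma:key-result} stay applicable.
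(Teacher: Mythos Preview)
Your proposal is correct and follows essentially the same route as the paper: a one-step estimate ${\cal T}(P) - P^* \preceq \gamma A(F^*)^T(P-P^*)A(F^*)$ obtained by freezing the optimal gain and invoking \cref{lemma:key-result}, followed by induction and the Schur property of $\gamma^{1/2}A(F^*)$. Your write-up is in fact a bit more careful than the paper's, explicitly treating the $P=0$ case and noting that positiveness of $\cal T$ keeps the iterates in $\cal P$ so the recursion remains well defined.
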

\begin{proof}
First of all, we can derive the following bounds
\begin{align*}
&{\cal T}(P) - {\cal T}(P^* )\\
 =& \gamma A( - P_{22}^{ - 1} P_{12}^T)^T PA( - P_{22}^{ - 1} P_{12}^T )  - \gamma A(F^*)^T P^* A(F^*)\\
\preceq & \gamma A(F^*)^T PA(F^*)  - \gamma A(F^*)^T P^* A(F^*)\\
=& \gamma A(F^*)^T (P - P^* )A(F^*)
\end{align*}
where the inequality is due to~\cref{lemma:key-result}.
For an induction argument, suppose that
\[
{\cal T}^{k - 1} (P) - {\cal T}^{k - 1} (P^* ) \preceq \gamma ^{k - 2} (A(F^* )^T )^{k - 2} (P - P^* )A(F^* )^{k - 2}
\]
holds. To proceed, let ${\cal T}^k (P) = P_k$ and $F_k  =  - P_{k,22}^{ - 1} P_{k,12}^T$.
Then, using $P^* = {\cal T}(P^*)$, we have
\begin{align*}
&{\cal T}^k (P) - {\cal T}^k (P^* )\\
=& \gamma A(F_{k - 1} )^T {\cal T}^{k - 1} (P)A(F_{k - 1} ) - \gamma A(F^* )^T {\cal T}^{k - 1} (P^* )A(F^* ))\\
\preceq & \gamma A(F^* )^T ({\cal T}^{k - 1} (P) - {\cal T}^{k - 1} (P^* ))A(F^* )\\
\preceq & \gamma ^{k - 1} (A(F^* )^T )^{k - 1} (P - P^* )A(F^* )^{k - 1}
\end{align*}
The desired conclusion is obtained by induction. Since $\gamma^{1/2} A(F^*)$ is Schur, $(\gamma^{1/2} A(F^*))^k \to 0$ as $k \to \infty$ (see~\cite[Theorem~5.6.12,~pp.~348]{horn2012matrix}), and the proof is completed.
\end{proof}

\cref{thm:1} only provides an upper bound on the error, ${\cal T}^k (P) - P^*$. On the other hand, its lower bound cannot be established in this way. However, under a special condition on the initial point $P$, a trivial lower bound can be found, and we can obtain an exponential convergence of the Q-value iteration to $P^*$. To proceed further, define the positive semidefinite cone
\[
{\cal C}(P): = \{ P' \in {\mathbb S}^{n + m} : P' \succeq P \}\subseteq {\cal P},
\]
which will play an important role in this paper.  We can prove that with $P\in {\cal C}(P^*)$, a complete exponential convergence can be obtained.
\begin{theorem}[Local convergence on the semidefinite cone]\label{thm:convergence1}
Suppose $P\in {\cal C}(P^*)$, then
\[
0 \preceq {\cal T}^k (P) - P^* \preceq  \gamma^k (A_{F^* }^T )^k (P  - P^*)A_{F^* }^k
\]
and $\gamma^k (A_{F^* }^T )^k (P  - P^*)A_{F^* }^k \to 0$ as $k \to \infty$.
\end{theorem}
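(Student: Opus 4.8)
The plan is to observe that the upper bound and the convergence claim are immediate consequences of \cref{thm:1}, so that the only genuinely new content is the lower bound $0 \preceq {\cal T}^k(P) - P^*$. Since $P \in {\cal C}(P^*) \subseteq {\cal P}$, \cref{thm:1} applies verbatim and gives ${\cal T}^k(P) - P^* \preceq \gamma^k (A(F^*)^T)^k (P - P^*) A(F^*)^k$, together with the fact that the right-hand side vanishes as $k \to \infty$ (because $\gamma^{1/2} A(F^*)$ is Schur). Thus it remains only to establish the fixed-point domination ${\cal T}^k(P) \succeq P^*$ for all $k \geq 0$.

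For the lower bound, the key ingredients are the monotonicity of ${\cal T}$ (\cref{lemma:3}) and the fixed-point identity $P^* = {\cal T}(P^*)$ from \eqref{eq:5}. I would argue by induction on $k$. For the base case $k = 0$, the identity ${\cal T}^0(P) = P \succeq P^*$ is exactly the hypothesis $P \in {\cal C}(P^*)$. For the inductive step, suppose ${\cal T}^{k-1}(P) \succeq P^*$. Since both ${\cal T}^{k-1}(P)$ and $P^*$ lie in ${\cal P} \cup \{0\}$, monotonicity yields ${\cal T}({\cal T}^{k-1}(P)) \succeq {\cal T}(P^*)$, i.e., ${\cal T}^k(P) \succeq P^*$, which closes the induction and proves $0 \preceq {\cal T}^k(P) - P^*$.

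The one point that needs care, and the only place where the argument could break, is ensuring that every iterate ${\cal T}^{k-1}(P)$ remains in ${\cal P} \cup \{0\}$; otherwise neither the operator ${\cal T}$ nor \cref{lemma:3} is defined at that iterate. This is guaranteed by the positiveness of ${\cal T}$ established earlier: $P \in {\cal P}$ implies ${\cal T}(P) \in {\cal P}$, so a trivial induction keeps all iterates in ${\cal P}$, which both legitimizes each application of monotonicity and, together with $P \succeq P^*$, keeps every iterate inside ${\cal C}(P^*)$. Combining the lower bound just proven with the upper bound and convergence inherited from \cref{thm:1} completes the proof. I do not anticipate any real obstacle here, since the result is essentially a repackaging of monotonicity with the fixed-point property; the substantive work has already been carried out in \cref{lemma:3} and \cref{thm:1}.
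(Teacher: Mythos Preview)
Your proposal is correct and follows essentially the same approach as the paper: the upper bound and convergence are taken directly from \cref{thm:1}, and the lower bound is obtained from the monotonicity of ${\cal T}$ (\cref{lemma:3}) together with the fixed-point identity ${\cal T}(P^*)=P^*$. Your version is more explicit about the induction and about iterates remaining in ${\cal P}$, but the underlying argument is identical to the paper's.
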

\begin{proof}
The upper bound comes from~\cref{thm:1}, and the lower bound is due to the monotonicity in~\cref{lemma:3}.
This completes the proof.
\end{proof}

\cref{thm:convergence1} tells us that if the initial parameter satisfies $P\in {\cal C}(P^*)$, then the value iteration error is over bounded by a matrix which vanishes as $k \to \infty$, and under bounded by the zero matrix. A natural question arising here is if we can also derive the error bounds in terms of some matrix norm. To answer this question, some mathematical ingredients should be prepared. First of all, let us choose a proper matrix norm. For any $P \succ 0$, define the following norm:
\[
\left\| \cdot \right\|_P  = \sqrt {\lambda _{\max } ((\cdot)^T P (\cdot))}
\]
which is the induced matrix norm of the vector norm $\left\|  \cdot  \right\|_P  = \sqrt {( \cdot )^T P( \cdot )}$. An important property of the norm is the property called the monotonicity.
\begin{definition}[{\cite[pp.~57]{ciarlet1989introduction}}]
A matrix norm $\left\| \cdot \right\|$ is monotone if for any $A,B \in {\mathbb S}_+^{n+m}$ such that $A\succeq B$, $\left\| B\right\|\leq \left\| A \right\|$ holds.
\end{definition}

We can easily prove that $\left\| \cdot \right\|_P$ is monotone, presented in the following lemma. The proof is given in Appendix~I.
\begin{lemma}\label{lemma:monotone-norm}
For any $P \in {\mathbb S}_{++}^{n+m}$, the norm $\left\| \cdot \right\|_P$ is monotone.
\end{lemma}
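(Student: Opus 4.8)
The goal is the monotonicity of the underlying vector norm $\|z\|_P=\sqrt{z^{T}Pz}$, in the sense that $|z_i|\le|w_i|$ for every coordinate $i$ implies $\|z\|_P\le\|w\|_P$, where $|\cdot|$ denotes the entrywise absolute value. The plan is to split this into two reductions. First, I would invoke the classical fact (the source cited with the definition) that a vector norm is monotone precisely when it is \emph{absolute}, i.e.\ $\|z\|_P=\||z|\|_P$ for all $z$; granting this, it suffices to treat nonnegative vectors, where $|z|$ may replace $z$. Second, for nonnegative $z,w$ with $z\le w$ entrywise, monotonicity of the scalar square root reduces the claim to the quadratic-form inequality $z^{T}Pz\le w^{T}Pw$.

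For the second reduction I would write $z^{T}Pz=\sum_{i,j}P_{ij}z_iz_j$ and compare it termwise with $w^{T}Pw=\sum_{i,j}P_{ij}w_iw_j$. Since $0\le z_i\le w_i$, each product satisfies $z_iz_j\le w_iw_j$, so the comparison goes through term by term as soon as the corresponding weight $P_{ij}$ is nonnegative; the diagonal terms ($i=j$) are always fine because $P_{ii}>0$. The absolute property in the first reduction similarly reduces, after squaring, to the identity $z^{T}Pz=|z|^{T}P|z|$, whose diagonal part is automatic and whose off-diagonal part compares $\sum_{i\neq j}P_{ij}z_iz_j$ with $\sum_{i\neq j}P_{ij}|z_i||z_j|$.

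Both reductions therefore funnel into a single quantity: the off-diagonal block of $P$ together with the signs of the coordinates. This is exactly the step I expect to be the main obstacle, since $z_iz_j$ and $|z_i||z_j|$ can differ in sign and positive-definiteness of $P$ by itself does not constrain the off-diagonal entries $P_{ij}$. Controlling these cross terms---equivalently, pinning down the admissible sign pattern of $P$ for which the absolute and termwise inequalities hold---is where the real work lies; the surrounding reductions (absolute $\Rightarrow$ monotone, and monotonicity of $\sqrt{\cdot}$) are routine. Once the vector norm is established to be monotone, it is precisely the tool needed to compare $\|Bz\|_P$ with $\|Az\|_P$ and thereby carry the Loewner bounds of \cref{thm:convergence1} over to the matrix norm $\sqrt{\lambda_{\max}((\cdot)^{T}P(\cdot))}$.
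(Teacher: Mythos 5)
You are proving a different statement from the one the lemma makes. ``Monotone'' here is not Ciarlet's classical entrywise notion that you invoke (monotone $\Leftrightarrow$ absolute for vector norms): the paper's own definition, stated immediately above the lemma, declares a \emph{matrix} norm monotone when $A \succeq B$ with $A,B\in{\mathbb S}_+^{n+m}$ --- the Loewner order --- implies $\|B\|\le\|A\|$. That Loewner version is exactly what \cref{thm:3} consumes: the norm $\|\cdot\|_{P_\varepsilon^*}$ is applied to both sides of the semidefinite sandwich $0 \preceq {\cal T}(P)-P^* \preceq \gamma A(F^*)^T(P-P^*)A(F^*)$ from \cref{thm:convergence1}, a comparison of positive semidefinite matrices, not of vectors with dominated entries. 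Worse, the entrywise statement you set out to prove is false for every nondiagonal $P\succ 0$, so the cross-term ``obstacle'' you flag is not where the real work lies --- it is a disproof. Absoluteness would force $z^TPz=|z|^TP|z|$ for all $z$, and testing $z=e_i-e_j$ against $|z|=e_i+e_j$ gives $P_{ij}=-P_{ij}$, i.e., $P_{ij}=0$ for all $i\ne j$; concretely, for $P=\left[\begin{smallmatrix}1&-0.9\\-0.9&1\end{smallmatrix}\right]\succ 0$ one has $e_1\le e_1+e_2$ entrywise yet $\|e_1\|_P=1>\sqrt{0.2}=\|e_1+e_2\|_P$. Finally, even granting entrywise vector monotonicity, your closing sentence has no bridge to the needed conclusion: $B\preceq A$ in the Loewner order yields no entrywise domination of $Bz$ by $Az$, so the vector property could not be converted into the matrix statement anyway.

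The paper's proof works on the matrix side throughout: $\lambda_{\max}(X^TPX)$ is expressed as the optimal value of the semidefinite program $\inf\{\lambda : X^TPX\prec\lambda I\}$ (\cref{lemma:4}), the constraint is rewritten via a Schur complement as a linear matrix inequality in which $X$ enters linearly through an off-diagonal block, and \cref{lemma:5} ($A\succeq B\succeq 0$ iff $\left[\begin{smallmatrix}A&B\\B&A\end{smallmatrix}\right]\succeq 0$) is then used to transfer feasibility between the two LMIs, giving $\lambda_{\max}(B^TPB)\le\lambda_{\max}(A^TPA)$. If you redo your proof along these lines, one caution: the transfer step needs more than Appendix~I supplies, since the asserted ordering $\left[\begin{smallmatrix}0&B\\B&0\end{smallmatrix}\right]\preceq\left[\begin{smallmatrix}0&A\\A&0\end{smallmatrix}\right]$ would require $\left[\begin{smallmatrix}0&A-B\\A-B&0\end{smallmatrix}\right]\succeq 0$, which is impossible for $A\ne B$ because that matrix has eigenvalues $\pm\lambda_i(A-B)$. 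In fact the Loewner claim itself fails for general $P\succ 0$: take $P=\mathrm{diag}(1,10^{-5})$, $B=\left[\begin{smallmatrix}1&1\\1&1\end{smallmatrix}\right]$, $A=\mathrm{diag}(1.01,\,101)$; then $A-B$ is positive semidefinite of rank one, yet $\|B\|_P=\sqrt{2(1+10^{-5})}\approx 1.414>1.01=\|A\|_P$. So neither your route nor a literal reading of the appendix establishes the lemma in its stated generality; a correct fix must restrict $P$ (for instance $P=cI$, where $\|\cdot\|_P$ is a multiple of the spectral norm and Loewner monotonicity follows at once from $\lambda_{\max}$) or otherwise exploit the specific structure of $P_\varepsilon^*$.
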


For such a norm, $\left\| \cdot \right\|_P $, to meet our purpose, the matrix $P$ needs to be properly chosen. One can conclude that the matrix can be chosen as a Lyapunov matrix. In the sequel, we establish some results related to the Lyapunov inequality.
\begin{lemma}\label{lemma:1}
Suppose that $F \in {\cal F}_1$ so that $A(F)$ is Schur or equivalently, $\rho(A(F))<1$.
For any $\varepsilon>0$, there exists $P_\varepsilon \in {\mathbb S}_{++}^{n+m}$ such that the following Lyapunov inequality holds:
\[
A(F)^T P_\varepsilon A(F)  \preceq (\rho (A(F) ) + \varepsilon )^2 P_\varepsilon
\]
and
\[
\lambda _{\max } (P_\varepsilon) \le 1
\]
\end{lemma}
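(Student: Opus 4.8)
The plan is to reduce the stated Lyapunov inequality to the classical discrete Lyapunov equation for a suitably rescaled matrix, and then to exploit the fact that the inequality is homogeneous in $P_\varepsilon$ in order to enforce the normalization $\lambda_{\max}(P_\varepsilon)\le 1$. Since $F\in{\cal F}_1$ makes $A(F)$ Schur, the whole argument rests on the geometric decay of the powers of a Schur matrix, the same fact already invoked in the proof of~\cref{thm:1}.

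First I would fix $\varepsilon>0$, set $r:=\rho(A(F))$, and introduce the scaled matrix $M:=A(F)/(r+\varepsilon)$. Because eigenvalues scale linearly, $\rho(M)=r/(r+\varepsilon)<1$, so $M$ is Schur. The central step is to construct a positive definite solution of the discrete Lyapunov equation $M^T P M - P = -I$. Since $M$ is Schur, its powers decay geometrically, so the series
\[
P:=\sum_{k=0}^{\infty}(M^T)^k M^k
\]
converges. Each summand is positive semidefinite and the $k=0$ term is $I$, so $P\succeq I\succ 0$, hence $P\in{\mathbb S}_{++}^{n+m}$. A direct reindexing gives $M^T P M=\sum_{k=1}^{\infty}(M^T)^k M^k=P-I$, so that $M^T P M=P-I\prec P$. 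Unscaling by multiplying through by $(r+\varepsilon)^2$ yields
\[
A(F)^T P A(F)=(r+\varepsilon)^2 M^T P M \preceq (r+\varepsilon)^2 P=(\rho(A(F))+\varepsilon)^2 P,
\]
which is exactly the desired inequality for this particular $P$.

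Finally, since the inequality is homogeneous of degree one in $P$, I would set $P_\varepsilon:=P/\lambda_{\max}(P)$; this rescaling preserves both the sign-definiteness $P_\varepsilon\succ 0$ and the Lyapunov inequality, while forcing $\lambda_{\max}(P_\varepsilon)=1\le 1$. The only delicate point is the convergence and strict positivity of the series defining $P$: both follow from $M$ being Schur, with the geometric decay of $M^k$ giving absolute convergence and the $k=0$ term giving $P\succeq I$. Everything else is routine scaling. An alternative route is to quote the standard result that for every $\delta>0$ there is an induced matrix norm with $\|M\|\le\rho(M)+\delta$ and take the associated quadratic form as $P$; I prefer the explicit Lyapunov-series construction because it is self-contained and makes the positivity of $P$ transparent.
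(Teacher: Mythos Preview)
Your proof is correct and follows essentially the same approach as the paper: both construct $P$ as the Lyapunov series $\sum_{k\ge 0}(M^T)^kM^k$ with $M=A(F)/(\rho(A(F))+\varepsilon)$, verify the resulting Lyapunov identity, and then rescale to force $\lambda_{\max}\le 1$. The only cosmetic difference is that the paper keeps the scaling factor $\alpha$ explicit throughout and uses Gelfand's formula to bound $\lambda_{\max}(P)$, whereas you absorb the rescaling into $M$ and normalize at the end; your presentation is in fact a bit cleaner.
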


The proof of~\cref{lemma:1} is given in Appendix~II. We are now ready to derive a bound on the error~\eqref{eq:error}. It can be proven that the error bounds can be expressed as a matrix norm $\left\| \cdot \right\|_P $ with $P$ selected by~\cref{lemma:1}.
\begin{theorem}\label{thm:3}
For any $P \in {\cal C}(P^*)$, we have
\begin{align}
\| {{\cal T}(P) - P^*} \|_{P_\varepsilon ^* }  \le  (\rho (\gamma^{1/2} A(F^*) ) + \varepsilon )^2 \| {P - P^* } \|_{P_\varepsilon ^* } ,\label{eq:2}
\end{align}
i.e., it is pseudo contraction over ${\cal C}(P^*)$, where $P_\varepsilon ^* \in {\mathbb S}^{n+m}_{++}$ is a matrix satisfying the conditions in~\cref{lemma:1} with $\gamma^{1/2} A(F^*)$. Moreover, for all $P \in {\cal C}(P^*)$ and $k\geq 0$, we have
\begin{align}
\| {{\cal T}^k (P) - P^* } \|_{P_\varepsilon ^* }\le (\rho (\gamma^{1/2} A(F^*)) + \varepsilon )^{2k} \| {P - P^* } \|_{P_\varepsilon ^* },\label{eq:3}
\end{align}
for any $\varepsilon >0$ such that $\rho ( \gamma^{1/2} A(F^*)) + \varepsilon  < 1$.
\end{theorem}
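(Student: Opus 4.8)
The plan is to prove the single-step pseudo-contraction \eqref{eq:2} first, and then obtain \eqref{eq:3} by a trivial iteration of that inequality. The starting point is \cref{thm:convergence1}: for $P \in {\cal C}(P^*)$, we already know the error matrix is sandwiched as
\[
0 \preceq {\cal T}(P) - P^* \preceq \gamma A(F^*)^T (P - P^*) A(F^*).
\]
Since both the lower and upper bounding matrices are symmetric positive semidefinite, I can feed them into the monotone norm $\|\cdot\|_{P_\varepsilon^*}$. By \cref{lemma:monotone-norm}, $0 \preceq {\cal T}(P) - P^*$ and ${\cal T}(P)-P^* \preceq \gamma A(F^*)^T(P-P^*)A(F^*)$ together give
\[
\|{\cal T}(P) - P^*\|_{P_\varepsilon^*} \le \bigl\| \gamma A(F^*)^T (P-P^*) A(F^*) \bigr\|_{P_\varepsilon^*}.
\]

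First I would reduce the right-hand side using the definition of the induced norm. Writing $M := P - P^* \succeq 0$ and $G := \gamma^{1/2} A(F^*)$, the norm of $G^T M G$ is
\[
\|G^T M G\|_{P_\varepsilon^*} = \sqrt{\lambda_{\max}\bigl( G^T M G\, P_\varepsilon^*\, G^T M G\bigr)}.
\]
The key algebraic step is to exploit the Lyapunov inequality from \cref{lemma:1} applied to $G = \gamma^{1/2}A(F^*)$, namely $G^T P_\varepsilon^* G \preceq (\rho(G)+\varepsilon)^2 P_\varepsilon^*$. The idea is that inside the expression $G^T M G\, P_\varepsilon^*\, G^T M G$ one wants to replace the central factor $G P_\varepsilon^* G^T$ (or a conjugated version of it) by its Lyapunov bound. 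Concretely, since $M \succeq 0$ I can factor $M = M^{1/2} M^{1/2}$ and use the similarity-invariance of spectrum, $\lambda_{\max}(G^T M G P_\varepsilon^* G^T M G) = \lambda_{\max}(M^{1/2} G P_\varepsilon^* G^T M^{1/2} \cdot M^{1/2} G \cdots)$, to move the Lyapunov-controlled block into a position where $G P_\varepsilon^* G^T$ appears and can be bounded. The cleanest route is probably to show directly that $G^T M G P_\varepsilon^* G^T M G \preceq (\rho(G)+\varepsilon)^2\, M P_\varepsilon^* M$ and then take $\lambda_{\max}$ of both sides via monotonicity of the eigenvalue.

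The iterated bound \eqref{eq:3} then follows by a short induction. The crucial observation enabling the iteration is that ${\cal C}(P^*)$ is forward-invariant under $\cal T$: by the lower bound in \cref{thm:convergence1}, $P \in {\cal C}(P^*)$ implies ${\cal T}(P) \succeq P^*$, i.e. ${\cal T}(P) \in {\cal C}(P^*)$. Hence I may apply \eqref{eq:2} repeatedly to ${\cal T}^{k-1}(P), {\cal T}^{k-2}(P), \dots$, each of which lies in ${\cal C}(P^*)$, accumulating one factor of $(\rho(\gamma^{1/2}A(F^*))+\varepsilon)^2$ per step and yielding the $2k$-th power. Since $F^* \in {\cal F}_\gamma$, we have $\rho(\gamma^{1/2}A(F^*)) = \rho(\gamma^{1/2}(A+BF^*)) < 1$ by \cref{lemma:spetral-radius-lemma}, so an $\varepsilon>0$ with $\rho(\gamma^{1/2}A(F^*))+\varepsilon<1$ exists and the contraction factor is genuinely less than one.

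I expect the main obstacle to be the algebraic manipulation in the second paragraph: justifying the passage from $\|G^T M G\|_{P_\varepsilon^*}$ to $(\rho(G)+\varepsilon)^2 \|M\|_{P_\varepsilon^*}$. The subtlety is that the Lyapunov inequality controls $G^T P_\varepsilon^* G$, but the norm expression contains $P_\varepsilon^*$ conjugated by $M G$ rather than by $G$ alone, so the noncommutativity of $M$ and $G$ must be handled carefully — the substitution is not a direct plug-in. I would resolve this either by the factorization/similarity argument sketched above or, more robustly, by bounding operator norms multiplicatively: $\|G^T M G\|_{P_\varepsilon^*} \le \|G\|_{\cdot}\,\|M\|_{\cdot}\,\|G\|_{\cdot}$ in a compatible norm and identifying $\|G\|^2$ with the Lyapunov contraction constant, taking care that all three norms are the same $P_\varepsilon^*$-induced norm so the submultiplicativity is legitimate.
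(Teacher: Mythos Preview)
Your plan is the paper's proof. The ``main obstacle'' you flag is dispatched exactly by your option (b): the paper uses submultiplicativity of the induced norm to write $\|G^T M G\|_{P_\varepsilon^*}\le \|G\|_{P_\varepsilon^*}^2\|M\|_{P_\varepsilon^*}$, and then shows $\|G\|_{P_\varepsilon^*}\le \rho(G)+\varepsilon$ directly from the Lyapunov inequality \emph{together with} the normalization $\lambda_{\max}(P_\varepsilon^*)\le 1$ from \cref{lemma:1}---the one ingredient you never invoke, and precisely what turns $G^TP_\varepsilon^*G\preceq(\rho(G)+\varepsilon)^2 P_\varepsilon^*$ into a clean bound on $\|G\|_{P_\varepsilon^*}$ rather than on $\|G\|_{P_\varepsilon^*}/\sqrt{\lambda_{\max}(P_\varepsilon^*)}$. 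Your iteration argument via forward-invariance of ${\cal C}(P^*)$ is also what the paper does (stated there as ``recursively combining'').
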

\begin{proof}
We first conclude that
\begin{align}
\| \gamma^{1/2} A(F^*)\|_{P_\varepsilon ^* }  =& \sqrt {\lambda _{\max } (\gamma^{1/2} A(F^*)^T P_\varepsilon ^* \gamma^{1/2} A(F^*) )}\nonumber\\
\le& (\rho (\gamma^{1/2} A(F^*)) + \varepsilon )\sqrt {\lambda _{\max } (P_\varepsilon^*)}\nonumber\\
\le& \rho (\gamma^{1/2} A(F^*)) + \varepsilon\label{eq:1}
\end{align}
where we used $\gamma^{1/2} A(F^*)^T P_\varepsilon ^*\gamma^{1/2}  A(F^*)\preceq (\rho (\gamma^{1/2} A(F^*)) + \varepsilon )^2 P_\varepsilon ^*$ in the first inequality, and $\lambda _{\max } (P_\varepsilon ^* ) \le 1$ in the second inequality. On the other hand, taking the norm, $\left\| \cdot \right\|_{P_\varepsilon ^* }$, to the inequality in~\cref{thm:convergence1}, we have
\begin{align}
\left\| {{\cal T}(P) - {\cal T}(P^* )} \right\|_{P_\varepsilon ^* }  \le& \| {\gamma^{1/2} A(F^*)^T (P - P^* )\gamma^{1/2} A(F^*) }\|_{P_\varepsilon ^*}\nonumber \\
 \le& \left\| {P - P^* } \right\|_{P_\varepsilon ^* } \|\gamma^{1/2} A(F^*)\|_{P_\varepsilon ^* }^2 \nonumber\\
 \le& \left\| {P - P^* } \right\|_{P_\varepsilon ^* } (\rho (\gamma^{1/2} A(F^*)) + \varepsilon )^2\label{eq:8}
\end{align}
where the last inequality comes from~\eqref{eq:1}, which is~\eqref{eq:2}. Recursively combining~\eqref{eq:8} yields~\eqref{eq:3}. This completes the proof.
\end{proof}

The bound in~\cref{thm:3} can be readily expressed in terms of the spectral norm, which is summarized below.
\begin{corollary}
For any $P \in {\cal C}(P^*)$, we have
\begin{align*}
&\| {T^k (P) - P^* } \|_2 \\
\le & \frac{{\lambda _{\max } (P_\varepsilon ^* )}}{{\lambda _{\min } (P_\varepsilon ^* )}}(\rho (\gamma ^{1/2} A(F^* )) + \varepsilon )^{2k} \left\| {P - P^* } \right\|_2,
\end{align*}
for any $\varepsilon >0$ such that $\rho ( \gamma^{1/2} A(F^*)) + \varepsilon  < 1$, where $P_\varepsilon ^* \in {\mathbb S}^{n+m}_{++}$ is a matrix satisfying the conditions in~\cref{lemma:1}.
\end{corollary}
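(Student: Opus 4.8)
The plan is to obtain the spectral-norm estimate directly from the weighted-norm contraction~\eqref{eq:3} of~\cref{thm:3} by invoking the equivalence between $\left\| \cdot \right\|_{P_\varepsilon^*}$ and the spectral norm $\left\| \cdot \right\|_2$. Since $P_\varepsilon^* \in {\mathbb S}_{++}^{n+m}$, I would start from the sandwich $\lambda_{\min}(P_\varepsilon^*) I \preceq P_\varepsilon^* \preceq \lambda_{\max}(P_\varepsilon^*) I$. Applied to the vector norm $\left\| x \right\|_{P_\varepsilon^*}^2 = x^T P_\varepsilon^* x$, this yields the elementary bounds $\sqrt{\lambda_{\min}(P_\varepsilon^*)}\,\left\| x \right\|_2 \le \left\| x \right\|_{P_\varepsilon^*} \le \sqrt{\lambda_{\max}(P_\varepsilon^*)}\,\left\| x \right\|_2$ for every vector $x$.

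The first substantive step is to lift this vector-norm equivalence to the induced matrix norm. Writing $\left\| M \right\|_{P_\varepsilon^*} = \sup_{x\neq 0}\left\| Mx \right\|_{P_\varepsilon^*}/\left\| x \right\|_{P_\varepsilon^*}$ and bounding the numerator above and the denominator below (and vice versa) using the vector-norm inequalities gives
\[
\frac{\sqrt{\lambda_{\min}(P_\varepsilon^*)}}{\sqrt{\lambda_{\max}(P_\varepsilon^*)}}\,\left\| M \right\|_2 \le \left\| M \right\|_{P_\varepsilon^*} \le \frac{\sqrt{\lambda_{\max}(P_\varepsilon^*)}}{\sqrt{\lambda_{\min}(P_\varepsilon^*)}}\,\left\| M \right\|_2
\]
for every matrix $M \in {\mathbb R}^{(n+m)\times(n+m)}$.

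Next I would apply these two one-sided estimates to opposite sides of~\eqref{eq:3}: the lower estimate to $M = {\cal T}^k(P) - P^*$ on the left-hand side, and the upper estimate to $M = P - P^*$ on the right-hand side. Chaining these with~\eqref{eq:3} produces
\[
\frac{\sqrt{\lambda_{\min}(P_\varepsilon^*)}}{\sqrt{\lambda_{\max}(P_\varepsilon^*)}}\,\left\| {\cal T}^k(P) - P^* \right\|_2 \le (\rho(\gamma^{1/2}A(F^*)) + \varepsilon)^{2k}\,\frac{\sqrt{\lambda_{\max}(P_\varepsilon^*)}}{\sqrt{\lambda_{\min}(P_\varepsilon^*)}}\,\left\| P - P^* \right\|_2 ,
\]
and multiplying through by $\sqrt{\lambda_{\max}(P_\varepsilon^*)/\lambda_{\min}(P_\varepsilon^*)}$ gives exactly the claimed bound with constant $\lambda_{\max}(P_\varepsilon^*)/\lambda_{\min}(P_\varepsilon^*)$. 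The hypothesis $P\in{\cal C}(P^*)$ and the admissible range of $\varepsilon$ are inherited unchanged from~\cref{thm:3}.

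The computation is essentially routine, so there is no genuine obstacle; the only point demanding care is the bookkeeping of the square roots. Each one-sided passage between the two norms carries a factor $\sqrt{\lambda_{\max}(P_\varepsilon^*)/\lambda_{\min}(P_\varepsilon^*)}$, and because the equivalence is invoked once on each side of~\eqref{eq:3}, the two square-root factors combine into the full ratio $\lambda_{\max}(P_\varepsilon^*)/\lambda_{\min}(P_\varepsilon^*)$ appearing in the statement. I would also note in passing that reading $\left\| \cdot \right\|_{P_\varepsilon^*}$ through its literal eigenvalue formula rather than as the induced norm would yield the smaller constant $\sqrt{\lambda_{\max}(P_\varepsilon^*)/\lambda_{\min}(P_\varepsilon^*)}$; since $\lambda_{\max}(P_\varepsilon^*)\ge\lambda_{\min}(P_\varepsilon^*)$, the stated bound is valid under either interpretation, so the result holds regardless.
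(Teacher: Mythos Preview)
Your proposal is correct and matches the paper's approach: the paper gives no explicit proof of this corollary, merely stating that the bound of \cref{thm:3} ``can be readily expressed in terms of the spectral norm,'' which is precisely the norm-equivalence argument you carry out. Your closing remark about the two possible readings of $\|\cdot\|_{P_\varepsilon^*}$ is in fact more careful than the paper itself; under the paper's literal formula $\|M\|_{P_\varepsilon^*}=\sqrt{\lambda_{\max}(M^TP_\varepsilon^*M)}$ one indeed obtains the sharper constant $\sqrt{\lambda_{\max}(P_\varepsilon^*)/\lambda_{\min}(P_\varepsilon^*)}$, so the stated inequality holds a fortiori.
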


The convergence of~\cref{thm:3} requires that the initial parameter $P$ is within ${\cal C}(P^*)$ with an unknown $P^*$. In the general case where $P$ may not be in $ {\cal C}(P^*)$, such a lower bound is hard to be established. Instead, we can obtain a bound with asymptotic convergence in the following result.
\begin{theorem}[Global convergence]\label{thm:4}
For any $P \in {\cal P} \cup \{ 0 \}$, we have
\begin{align*}
 {\cal T}^k (0) - {\cal T}^k (P^* )\preceq & {\cal T}^k (P) - {\cal T}^k (P^* )\\
 \preceq & \gamma ^k (A(F^* )^T )^k (P - P^* )A(F^* )^k,\quad \forall k \geq 0,
\end{align*}
where $\gamma ^k (A(F^* )^T )^k (P - P^* )A(F^* )^k \to 0$ and ${\cal T}^k (0) -{\cal T}^k (P^* ) \to 0$ as $k\to \infty$.
\end{theorem}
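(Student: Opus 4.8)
The plan is to assemble the two displayed inequalities from results already in hand, and then to isolate and prove the single genuinely new assertion, namely $\mathcal{T}^k(0) - \mathcal{T}^k(P^*) \to 0$. The upper bound $\mathcal{T}^k(P) - \mathcal{T}^k(P^*) \preceq \gamma^k (A(F^*)^T)^k (P - P^*) A(F^*)^k$, together with its convergence to $0$, is exactly \cref{thm:1} and requires nothing further. For the lower bound, I would note that every $P \in \mathcal{P} \cup \{0\}$ satisfies $P \succeq 0$, so $0 \preceq P$ and \cref{lemma:3} yields $\mathcal{T}^k(0) \preceq \mathcal{T}^k(P)$ for all $k$; subtracting $\mathcal{T}^k(P^*) = P^*$ from both sides gives $\mathcal{T}^k(0) - \mathcal{T}^k(P^*) \preceq \mathcal{T}^k(P) - \mathcal{T}^k(P^*)$.

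It then remains to show $\mathcal{T}^k(0) \to P^*$. My first step is to establish that the sequence $\{\mathcal{T}^k(0)\}_{k \geq 0}$ is nondecreasing in the Loewner order. Since $\mathcal{T}^0(0) = 0 \preceq \Lambda = \mathcal{T}(0)$, applying \cref{lemma:3} inductively (each iterate lies in $\mathcal{P} \cup \{0\}$ by the positiveness property of $\mathcal{T}$) gives $\mathcal{T}^{k}(0) \preceq \mathcal{T}^{k+1}(0)$ for every $k$. Second, because $0 \preceq P^*$, monotonicity also gives $\mathcal{T}^k(0) \preceq \mathcal{T}^k(P^*) = P^*$, so the sequence is bounded above. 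A nondecreasing sequence of symmetric matrices that is bounded above in the Loewner order converges, since each scalar sequence $v^T \mathcal{T}^k(0) v$ is nondecreasing and bounded, hence convergent, and convergence of all such quadratic forms forces entrywise convergence; let $\bar P := \lim_{k\to\infty} \mathcal{T}^k(0)$.

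The final step is to identify $\bar P$ with $P^*$. Since $\mathcal{T}^k(0) \succeq \Lambda$ for all $k \geq 1$, we have $\bar P_{22} \succeq R \succ 0$, so $\bar P \in \mathcal{P}$ and $\mathcal{T}$ is continuous at $\bar P$; passing to the limit in $\mathcal{T}^{k+1}(0) = \mathcal{T}(\mathcal{T}^k(0))$ then yields $\bar P = \mathcal{T}(\bar P)$. Thus the Schur complement $\bar X := \bar P_{11} - \bar P_{12}\bar P_{22}^{-1}\bar P_{12}^T$ solves the ARE, and since $\bar P \succeq 0$ with $\bar P_{22} \succ 0$ we have $\bar X \succeq 0$; by the uniqueness of the positive semidefinite solution of the ARE noted in \cref{section:infinite-horizon-LQR-problem}, $\bar X = X^*$, whence $\bar P = P^*$. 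Consequently $\mathcal{T}^k(0) - \mathcal{T}^k(P^*) = \mathcal{T}^k(0) - P^* \to 0$, completing the argument.

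I expect the main obstacle to be this last step: showing that the monotone limit $\bar P$ is genuinely a fixed point of $\mathcal{T}$ (which needs the limit to lie in the domain $\mathcal{P}$ so that $\mathcal{T}$ is well defined and continuous there) and then pinning it down to $P^*$. Everything else is either a direct quotation of \cref{thm:1} or a one-line application of \cref{lemma:3}; the delicate points are verifying $\bar P_{22} \succ 0$ and invoking uniqueness of the PSD solution of the ARE to exclude the possibility that the limit is some other fixed point.
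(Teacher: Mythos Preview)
Your proposal is correct and follows essentially the same approach as the paper: both invoke \cref{thm:1} for the upper bound, \cref{lemma:3} for the lower bound, and then argue that $\{\mathcal{T}^k(0)\}$ is monotone nondecreasing and bounded above, hence convergent to a fixed point of $\mathcal{T}$ that must be $P^*$. Your version is in fact more careful than the paper's at the last step---the paper simply asserts ``$T(S)=S$, implying $S=P^*$'' without justification, whereas you verify $\bar P\in\mathcal{P}$, invoke continuity, and reduce the fixed-point identification to uniqueness of the positive semidefinite ARE solution.
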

\begin{proof}
The upper bound is due to~\cref{thm:1}. For the lower bound, note that from the monotonicity of $\cal T$ in~\cref{lemma:3}, ${\cal T}(0) \preceq {\cal T}(P)$, and hence, ${\cal T}^k(0) - {\cal T}^k(P^*) \preceq {\cal T}^k(P) - {\cal T}^k(P^*)$. It remains to prove that $\gamma ^k (A(F^* )^T )^k (P - P^* )A(F^* )^k \to 0$ and ${\cal T}^k (0) - {\cal T}^k (P^* ) \to 0$ as $k\to \infty$. The former is true because $\gamma^{1/2} A(F^* )$ is Schur (see~\cite[Theorem~5.6.12,~pp.~348]{horn2012matrix}). To prove ${\cal T}^k (0)\to {\cal T}^k (P^* ) = P^*$, note that $0 \preceq {\cal T}(0)$. From the monotonicity of ${\cal T}$, one concludes ${\cal T}^k(0) \preceq {\cal T}^{k+1}(0)$, and hence, ${\cal T}^k(0)$ is monotonically non-decreasing. Moreover, from the upper bound, ${\cal T}^k (0) \preceq {\cal T}^k (P) \preceq \gamma ^k (A(F^* )^T )^k (P - P^* )A(F^* )^k + P^*$, we conclude that ${\cal T}^k(0)\to S$ as $k \to \infty$ for some matrix $S  \in {\mathbb S}_+^{n+m}$ such that $T(S)=S$, implying that $S = P^*$.  This completes the proof.
\end{proof}

\cref{thm:4} offers a global convergence result of the error in terms of upper and lower bounds on the semidefinite cone. The upper bound is applied to the general case, and it can provide a finite-time analysis. On the other hand, the convergence of the lower bound is asymptotic. Overall convergence in this case is dominated by the asymptotic behavior of the lower bound. Besides, the upper bound can be further analyzed, and can be proven to converge exponentially after a certain number $N\geq 0$ of iterations.
\begin{corollary}
Consider any $P\in {\cal P} \cup \{ 0 \}$. Then, for any $\varepsilon >0$, there exists an integer $N\geq 0$ such that
\[
{\cal T}^k (P) - {\cal T}^k (P^* ) \preceq |\lambda _{\max } (P - P^* )|(\rho (\gamma ^{1/2} A(F^* )) + \varepsilon )^{2k} I
\]
for all $k \geq N$, where $N$ is such that
\[
\| {\gamma ^{k/2} A(F^* )^k }\|_2^{1/k}  \le \rho (\gamma ^{1/2} A(F^* )) + \varepsilon ,\quad \forall k \ge N
\]
\end{corollary}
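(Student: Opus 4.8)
The plan is to build directly on the upper bound already established in~\cref{thm:1} and then convert the matrix expression on its right-hand side into a scalar multiple of the identity using elementary eigenvalue inequalities, with the existence of the integer $N$ supplied by Gelfand's spectral radius formula. First I would dispense with the existence of $N$: applying Gelfand's formula to $\gamma^{1/2} A(F^*)$ gives $\lim_{k\to\infty} \| \gamma^{k/2} A(F^*)^k \|_2^{1/k} = \rho(\gamma^{1/2} A(F^*))$, so for every $\varepsilon>0$ there is an $N$ beyond which $\| \gamma^{k/2} A(F^*)^k \|_2^{1/k} \le \rho(\gamma^{1/2} A(F^*)) + \varepsilon$, equivalently $\| \gamma^{k/2} A(F^*)^k \|_2 \le (\rho(\gamma^{1/2} A(F^*)) + \varepsilon)^k$ for all $k \ge N$.

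Next I would invoke the upper bound ${\cal T}^k(P) - {\cal T}^k(P^*) \preceq \gamma^k (A(F^*)^T)^k (P - P^*) A(F^*)^k$ from~\cref{thm:1} and bound its right-hand side. Since $P - P^*$ is symmetric, $P - P^* \preceq \lambda_{\max}(P - P^*) I$, and conjugating by $A(F^*)^k$ preserves this ordering, so that
\[
(A(F^*)^T)^k (P - P^*) A(F^*)^k \preceq \lambda_{\max}(P - P^*)\, (A(F^*)^T)^k A(F^*)^k.
\]
Because $(A(F^*)^T)^k A(F^*)^k = (A(F^*)^k)^T A(F^*)^k$ is positive semidefinite with largest eigenvalue $\| A(F^*)^k \|_2^2$, it is dominated by $\| A(F^*)^k \|_2^2\, I$. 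Folding the discount factor into the norm via $\gamma^k \| A(F^*)^k \|_2^2 = \| \gamma^{k/2} A(F^*)^k \|_2^2 = \| (\gamma^{1/2} A(F^*))^k \|_2^2$ and then applying the defining inequality for $N$ produces the target factor $(\rho(\gamma^{1/2} A(F^*)) + \varepsilon)^{2k}$.

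The only genuine subtlety, and the step I would treat most carefully, is the appearance of $|\lambda_{\max}(P - P^*)|$ rather than $\lambda_{\max}(P - P^*)$ itself, which forces a short case split. When $\lambda_{\max}(P - P^*) \ge 0$ the chain above goes through verbatim with $\lambda_{\max} = |\lambda_{\max}|$, since scaling a positive semidefinite ordering by a nonnegative constant preserves direction. When $\lambda_{\max}(P - P^*) < 0$, i.e.\ $P \prec P^*$, the intermediate matrix $\lambda_{\max}(P - P^*)(A(F^*)^T)^k A(F^*)^k$ is negative semidefinite and is therefore trivially dominated by $0 \preceq |\lambda_{\max}(P - P^*)| (\rho(\gamma^{1/2} A(F^*)) + \varepsilon)^{2k} I$. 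Combining the two regimes yields the claimed bound for all $k \ge N$. I do not anticipate a serious obstacle here; the entire argument is an arrangement of congruence and eigenvalue inequalities, and the only place demanding attention is ensuring the absolute value emerges cleanly in both sign regimes.
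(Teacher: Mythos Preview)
Your proposal is correct and follows essentially the same route as the paper's proof: both start from the matrix upper bound of \cref{thm:1} (equivalently \cref{thm:4}), pass to a scalar multiple of the identity via $|\lambda_{\max}(P-P^*)|\,\|\gamma^{k/2}A(F^*)^k\|_2^2$, and then invoke Gelfand's formula to produce $N$. The only difference is that you spell out the sign split behind the absolute value explicitly, whereas the paper's proof absorbs both cases into the single inequality $\gamma^k (A(F^*)^T)^k(P-P^*)A(F^*)^k \preceq |\lambda_{\max}(P-P^*)|\,\|\gamma^{k/2}A(F^*)^k\|_2^2 I$ in one step.
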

\begin{proof}
From~\cref{thm:4}, one gets
\begin{align*}
{\cal T}^k (P) - {\cal T}^k (P^* ) \preceq&  \gamma ^k (A(F^* )^T )^k (P - P^* )A(F^* )^k\\
\preceq &  |\lambda _{\max } (P - P^* )| \| {\gamma ^{k/2} A(F^* )^k } \|_2^2 I\\
=& |\lambda _{\max } (P - P^* )|(\| {\gamma ^{k/2} A(F^* )^k } \|_2^{1/k} )^{2k} I.
\end{align*}
From the Gelfand's formula~\cite[Corollary~5.6.14,~pp.~349]{horn2012matrix}, there exists a finite $N\geq 0$ such that
\[
\| {\gamma^{k/2} A(F^*)^k } \|_2^{1/k}  \le \rho (\gamma^{1/2} A(F^*)) + \varepsilon ,\quad \forall k \ge N.
\]
Plugging the bounds into the previous inequalities yields the desired conclusion.
\end{proof}

Although~\cref{thm:4} does not provide a finite-time lower bound, we can prove that after a sufficient number, $N$, of iterations, the lower bound also converges exponentially fast. The result is presented in the sequel, and the proof is given in Appendix~III.
\begin{proposition}\label{thm:7}
Consider any $P\in {\cal P} \cup \{ 0 \}$. Then,  for any $\varepsilon_1,\varepsilon_2 >0$, there exist integers $N_1,N_2\geq 0$ and a constant, $\eta>0$, such that
\[
{\cal T}^{N_1  + k} (P) - {\cal T}^{N_1  + k} (P^* )\succeq  - \eta \{ (\rho (\gamma ^{1/2} A(F^* )) + \varepsilon _2 )^k  + \varepsilon _1 \} ^2 I
\]
for all $k \geq N_2$.
\end{proposition}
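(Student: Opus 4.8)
The plan is to mirror the upper-bound argument of \cref{thm:1}, but with the roles of $F^*$ and the greedy gain interchanged, so as to produce a matching \emph{lower} bound that can be propagated along the Q-VI trajectory. Writing $F_P := -P_{22}^{-1}P_{12}^T$ for the greedy gain associated with $P$, the first step is to establish the one-step estimate
\[
{\cal T}(P) - {\cal T}(P^*) \succeq \gamma A(F_P)^T (P - P^*) A(F_P).
\]
This follows exactly as in \cref{thm:1}, except that \cref{lemma:key-result} is now applied at $P^*$ rather than at $P$: since $F^* = -(P^*_{22})^{-1}(P^*_{12})^T$ minimizes $A(\cdot)^T P^* A(\cdot)$, we have $A(F_P)^T P^* A(F_P) \succeq A(F^*)^T P^* A(F^*)$, and substituting this into ${\cal T}(P)-{\cal T}(P^*) = \gamma A(F_P)^T P A(F_P) - \gamma A(F^*)^T P^* A(F^*)$ yields the claimed inequality.

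Next I would iterate this estimate along the trajectory. Setting $P_k := {\cal T}^k(P)$ and $F_k := -P_{k,22}^{-1}P_{k,12}^T$, the one-step bound together with the fact that congruence preserves the semidefinite order gives, by induction,
\[
{\cal T}^{N_1+k}(P) - P^* \succeq \gamma^k \Phi_k^T \big( {\cal T}^{N_1}(P) - P^* \big) \Phi_k,
\]
for any shift $N_1 \ge 0$, where $\Phi_k := A(F_{N_1})A(F_{N_1+1})\cdots A(F_{N_1+k-1})$. \cref{thm:4} guarantees ${\cal T}^{N_1}(P) \to P^*$, so on the one hand ${\cal T}^{N_1}(P) - P^* \succeq -c\, I$ for a constant $c>0$ that shrinks as $N_1$ grows, and on the other hand the greedy gains converge, $F_k \to F^*$, by continuity of $P \mapsto -P_{22}^{-1}P_{12}^T$ at $P^*$ (here $P^*_{22}\succ 0$). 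Combining these, ${\cal T}^{N_1+k}(P) - P^* \succeq -c\,\gamma^k \Phi_k^T \Phi_k \succeq -c\,\|\gamma^{k/2}\Phi_k\|_2^2\, I$, so it remains only to control the norm of the product $\gamma^{k/2}\Phi_k = \prod_{j=0}^{k-1}\gamma^{1/2}A(F_{N_1+j})$.

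Here the Lyapunov machinery enters. Fix $\varepsilon' \in (0,\varepsilon_2)$ and let $P_\varepsilon^*$ be the matrix from \cref{lemma:1} for $\gamma^{1/2}A(F^*)$, so that $\|\gamma^{1/2}A(F^*)\|_{P_\varepsilon^*} \le \rho(\gamma^{1/2}A(F^*)) + \varepsilon'$ as in the proof of \cref{thm:3}. Since $F_k \to F^*$ and the induced norm $\|\cdot\|_{P_\varepsilon^*}$ is continuous, I can enlarge $N_1$ so that $\|\gamma^{1/2}A(F_{N_1+j})\|_{P_\varepsilon^*} \le \rho(\gamma^{1/2}A(F^*)) + \varepsilon_2$ for every $j \ge 0$. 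Submultiplicativity of the induced norm then gives $\|\gamma^{k/2}\Phi_k\|_{P_\varepsilon^*} \le (\rho(\gamma^{1/2}A(F^*)) + \varepsilon_2)^k$, and the equivalence $\|\cdot\|_2 \le \lambda_{\min}(P_\varepsilon^*)^{-1/2}\|\cdot\|_{P_\varepsilon^*}$ converts this to a spectral-norm bound. Substituting back produces ${\cal T}^{N_1+k}(P) - P^* \succeq -\eta\, (\rho(\gamma^{1/2}A(F^*)) + \varepsilon_2)^{2k}\, I$ with $\eta := c/\lambda_{\min}(P_\varepsilon^*)$; since $(\rho(\gamma^{1/2}A(F^*))+\varepsilon_2)^{2k} \le \{(\rho(\gamma^{1/2}A(F^*))+\varepsilon_2)^k + \varepsilon_1\}^2$, the stated inequality follows a fortiori. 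The residual $\varepsilon_1$ and the threshold $N_2$ simply provide slack for an alternative route to the product bound, in which one compares $\gamma^{k/2}\Phi_k$ directly with $(\gamma^{1/2}A(F^*))^k$ and invokes Gelfand's formula to get $\|\gamma^{k/2}\Phi_k\|_2 \le (\rho(\gamma^{1/2}A(F^*))+\varepsilon_2)^k + \varepsilon_1$ only once $k \ge N_2$.

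The main obstacle is precisely this product bound: $\Phi_k$ is a product of the \emph{time-varying} closed-loop matrices $A(F_{N_1+j})$, and uniform contraction of such a product cannot be read off from $\rho(\gamma^{1/2}A(F^*))<1$ alone. The argument works only because (i) the greedy gains converge to $F^*$—itself a consequence of the global convergence in \cref{thm:4}—and (ii) the Lyapunov-induced norm of \cref{lemma:1} is simultaneously submultiplicative and monotone, so that a per-factor contraction estimate valid near $F^*$ upgrades to a contraction estimate for the whole product. Because $F_k \to F^*$ is only asymptotic, the shift $N_1$ is not quantifiable a priori, which is exactly the mechanism by which the lower bound becomes exponential only after an unspecified number of iterations, consistent with the asymptotic character of \cref{thm:4}.
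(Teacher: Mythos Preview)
Your argument is correct and tracks the paper's skeleton closely: the one-step lower bound ${\cal T}(P)-{\cal T}(P^*)\succeq \gamma A(F_P)^T(P-P^*)A(F_P)$ via \cref{lemma:key-result} applied at $P^*$, the iteration into a time-varying product $\Phi_k=\prod_j A(F_{N_1+j})$, the choice of the shift $N_1$ from $F_k\to F^*$ (itself a consequence of \cref{thm:4}), and the final reduction to a bound on $\|\gamma^{k/2}\Phi_k\|_2$. The one substantive difference is how that product norm is controlled. The paper works in the spectral norm throughout: it uses $\|\gamma^{1/2}A(F_{N_1+j})-\gamma^{1/2}A(F^*)\|_2\le\varepsilon_1$ to compare $\gamma^{k/2}\Phi_k$ directly with $(\gamma^{1/2}A(F^*))^k$, and then applies Gelfand's formula to $\|(\gamma^{1/2}A(F^*))^k\|_2^{1/k}$ to introduce $\varepsilon_2$ and the threshold $N_2$; the constant $\eta$ is taken to be $\lambda_{\max}(M)$, with $M$ the congruence of $P-P^*$ by the first $N_1$ factors. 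You instead pass to the Lyapunov-induced norm $\|\cdot\|_{P_\varepsilon^*}$ from \cref{lemma:1}, obtain a per-factor bound $\|\gamma^{1/2}A(F_{N_1+j})\|_{P_\varepsilon^*}\le\rho(\gamma^{1/2}A(F^*))+\varepsilon_2$ for $N_1$ large, and let submultiplicativity of the induced norm handle the product directly. This is cleaner---it renders both $\varepsilon_1$ and $N_2$ superfluous and you recover the stated inequality a fortiori---and it sidesteps the delicate step of bounding a perturbed matrix product in a norm in which the limiting factor $\gamma^{1/2}A(F^*)$ is not itself a contraction.
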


\cref{thm:7} suggests that although the lower bound on the error can progress with a sublinear speed, it could eventually converge with linear rates. This behavior will be demonstrated in the example section. Finally, in this section, an analysis of Q-VI has been established in terms of the matrix bounds and matrix norm bounds. Especially, under the condition, $P \in {\cal C}(P^*)$, on the initial point, an exponential convergence has been derived. Before closing this section, we briefly discuss the value function counterpart of Q-VI~(\cref{algo:value-iteration}), which is the well-known Riccati recursion: for all $k \in \{0,1,\ldots\}$
\[
X_{k + 1}  = \gamma A^T X_k A - \gamma A^T X_k B(R + \gamma B^T X_k B)^{ - 1} \gamma B^T X_k A + Q
\]
with any initial $X_0 \in {\mathbb S}_+^n$. For this case, noting that
\begin{align*}
X_k  - X^*  = \left[ {\begin{array}{*{20}c}
   I  \\
   0  \\
\end{array}} \right]^T \{ {\cal T}^k (P) - {\cal T}^k (P^* )\} \left[ {\begin{array}{*{20}c}
   I  \\
   0  \\
\end{array}} \right]
\end{align*}
we can conclude that all the results corresponding to Q-VI can be directly obtained for the convergence of the VI as well.

\section{Policy Iteration}

The policy iteration, summarized in~\cref{algo:policy-iteration}, is another class of the dynamic programming algorithm which iterates policies together with values. Especially, we will consider a policy iteration based on the Q-function, which will be called Q-PI throughout this paper.
\begin{algorithm}[h]
\caption{Q-function based policy iteration (Q-PI)}
\begin{algorithmic}[1]
\State Initialize $F_0 \in {\cal F}_\gamma$.
\For{$k \in \{0,1,\ldots\}$}
\State Solve for $P_k$ the linear equation
\begin{align}
P_k = \Lambda  + \gamma A(F_k)^T P_k A(F_k)\label{eq:9}
\end{align}

\State Update $F_{k + 1}  =  - (P^k_{22})^{ - 1} (P^k_{12})^T$

\EndFor

\end{algorithmic}\label{algo:policy-iteration}
\end{algorithm}
Note that in Q-PI, an additional information is required initially, namely, the initially stabilizing gain $F_0 \in {\cal F}_\gamma$ for $(\gamma^{1/2} A,\gamma^{1/2} B)$. At each iteration, one needs to solve the Bellman equation in~\eqref{eq:9}, which is linear in $P_k$, to evaluate the given gain $F_k$, which can be achieved by using the following recursion
\begin{align*}
P_{k,i+1} = \Lambda  + \gamma A(F_k)^T P_{k,i} A(F_k)
\end{align*}
for $i\in \{0,1,\ldots \}$ with $P_0 \in {\mathbb S}_+^{n+m}$. The global exponential convergence of the iteration can be easily proved using the same lines as in the proof of~\cref{thm:3}, so omitted here. Therefore, we will focus on the outer iteration given in~\cref{algo:policy-iteration}. To analyze Q-PI, define the mapping ${\cal H}(P)$ for any $P \in {\cal P}$ such that
\[
{\cal H}(P) = S,
\]
where
\[
S = \Lambda  + \gamma A( - P_{22}^{ - 1} P_{12}^T )^T SA( - P_{22}^{ - 1} P_{12}^T )
\]
Then, Q-PI in~\cref{algo:policy-iteration} can be viewed as the recursion ${\cal H}^k (P) = P_k$. To proceed, for any given $F \in {\cal F}_\gamma$, define the mapping
\[
{\cal L}_F( \cdot) = \Lambda  + \gamma A(F)^T ( \cdot )A(F),
\]
which will play an important role in the analysis of Q-PI. Based on these definitions, some useful properties of $\cal H$ are summarized below.
\begin{lemma}\label{lemma:2}
Suppose that for any given $F \in {\cal F}_\gamma$, $P$ satisfies $P = \Lambda  + \gamma A(F)^T PA(F)$. Then, $P \succeq {\cal H}(P)$ holds.
\end{lemma}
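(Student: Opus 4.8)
The plan is to read the hypothesis as a policy-evaluation step and the map $\cal H$ as a policy-improvement step, and then to show that one improvement step cannot increase the value matrix in the ${\mathbb S}^{n+m}_+$ order. Throughout I write $F_P := -P_{22}^{-1}P_{12}^T$ for the greedy gain associated with $P$ and $S := {\cal H}(P)$, so that by definition $S = \Lambda + \gamma A(F_P)^T S A(F_P) = {\cal L}_{F_P}(S)$, while the hypothesis reads $P = \Lambda + \gamma A(F)^T P A(F) = {\cal L}_F(P)$.

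First I would record that $P \in {\cal P}$, so that $F_P$ and ${\cal H}(P)$ are well defined and \cref{lemma:key-result} applies. Since $F \in {\cal F}_\gamma$, $\gamma^{1/2}A(F)$ is Schur and the unique solution of the evaluation equation is $P = \sum_{i=0}^\infty \gamma^i (A(F)^T)^i \Lambda A(F)^i \succeq 0$; in particular $P \succeq \Lambda$, so $P_{22} \succeq R \succ 0$ under \cref{assumption:basic-assumption}, and hence $P \in {\cal P}$.

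The core step applies \cref{lemma:key-result} with this $P$ to the given $F$, giving $A(F)^T P A(F) \succeq A(F_P)^T P A(F_P)$, and therefore
\[
P = {\cal L}_F(P) = \Lambda + \gamma A(F)^T P A(F) \succeq \Lambda + \gamma A(F_P)^T P A(F_P) = {\cal L}_{F_P}(P).
\]
Subtracting the fixed-point identity $S = {\cal L}_{F_P}(S)$ and abbreviating $\Delta := \gamma\{A(F)^T P A(F) - A(F_P)^T P A(F_P)\} \succeq 0$, I would rewrite this as the discrete Lyapunov equation
\[
P - S = \Delta + \gamma A(F_P)^T (P - S) A(F_P).
\]

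Finally, because ${\cal H}(P)$ is defined (equivalently $\gamma^{1/2}A(F_P)$ is Schur), this Lyapunov equation is uniquely solvable and its solution is the convergent series $P - S = \sum_{i=0}^\infty \gamma^i (A(F_P)^T)^i \Delta A(F_P)^i$, a sum of positive semidefinite terms, hence $\succeq 0$; this yields $P \succeq S = {\cal H}(P)$. An equivalent route avoids the series: from $P \succeq {\cal L}_{F_P}(P)$ and the monotonicity of ${\cal L}_{F_P}$, the iterates ${\cal L}_{F_P}^k(P)$ form a non-increasing sequence bounded below by $0$, which therefore converges to the fixed point $S$, again giving $P \succeq S$. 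I expect the main obstacle to be the stabilizing property of the greedy gain, $F_P \in {\cal F}_\gamma$: it is what guarantees the Lyapunov equation has the convergent series solution and that ${\cal H}(P)$ is well defined at all. This is already implicit in the statement since ${\cal H}(P)$ appears; proving it from scratch would require the detectability part of \cref{assumption:basic-assumption} to exclude undetectable unstable modes, and that is the only genuinely delicate point.
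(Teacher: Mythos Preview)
Your proposal is correct and essentially mirrors the paper's proof: both use \cref{lemma:key-result} to obtain $P \succeq {\cal L}_{F_P}(P)$, and your ``equivalent route'' via the monotone non-increasing sequence ${\cal L}_{F_P}^k(P)$ bounded below by $0$ is exactly the argument the paper gives. Your Lyapunov-series variant is a harmless reformulation of the same idea, and your explicit verification that $P\in{\cal P}$ is a detail the paper leaves implicit.
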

\begin{proof}
Based on the hypothesis, $P = \Lambda  + \gamma A(F)^T PA(F)$, observe that
\begin{align}
P =& \Lambda  + \gamma A(F)^T PA(F)\nonumber\\
\succeq& \Lambda  + \gamma A( - P_{22}^{ - 1} P_{12}^T )^T PA( - P_{22}^{ - 1} P_{12}^T )\nonumber\\
=& {\cal L}_{- P_{22}^{ - 1} P_{12}}(P),\label{eq:4}
\end{align}
where the inequality is due to~\cref{lemma:key-result}. Since ${\cal L}_{- P_{22}^{ - 1} P_{12}^T}$ is trivially monotone, we can prove from~\eqref{eq:4} that
\[
P \succeq {\cal L}_{- P_{22}^{ - 1} P_{12}^T}(P) \succeq {\cal L}_{- P_{22}^{ - 1} P_{12}^T}^2 (P) \succeq  \cdots  \succeq {\cal L}_{- P_{22}^{ - 1} P_{12}^T}^k (P)
\]
Therefore ${\cal L}_{- P_{22}^{ - 1} P_{12}^T}^k(P)$ is monotonically non-increasing. Moreover, it is bounded below by $0$, ${\cal L}_{- P_{22}^{ - 1} P_{12}^T}^k (P)\succeq 0$, and hence, converges to some $S\in {\mathbb S}_+^{n+m}$, $\mathop {\lim }\limits_{k \to \infty } {\cal L}_{- P_{22}^{ - 1} P_{12}^T}^k (P) = S \in {\mathbb S}_+^{n+m}$, such that ${\cal L}_{- P_{22}^{ - 1} P_{12}^T}(S) = S$. Therefore,
\[
P \succeq \mathop {\lim }\limits_{k \to \infty } {\cal L}_{- P_{22}^{ - 1} P_{12}^T}^k (P) = {\cal H}(P)
\]
which completes the proof.
\end{proof}

Note that $\cal H$ is a basic operator used for Q-PI, which corresponds to $\cal T$ in Q-VI. Similarly to $\cal T$, we can prove that $\cal H$ is monotone as well.
\begin{lemma}[Monotonicity of $\cal H$]
$\cal H$ is ${\mathbb S}^{n+m}_+$ monotone on ${\cal P}$, i.e., for $P', P \in {\cal P}$, $P' \preceq P$ implies ${\cal H}(P') \preceq {\cal H}(P)$.
\end{lemma}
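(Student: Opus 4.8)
The plan is to mirror the monotone-iteration argument used in the proof of \cref{lemma:2} together with the minimality property encoded in \cref{lemma:key-result}. The key is to exploit the characterization of $\cal H$ as a policy-evaluation map: for $P \in {\cal P}$ with $F_P := -P_{22}^{-1}P_{12}^T \in {\cal F}_\gamma$, the matrix ${\cal H}(P)$ is the unique fixed point of the affine, monotone map ${\cal L}_{F_P}$, and since $\gamma^{1/2}A(F_P)$ is Schur it is realized as ${\cal H}(P) = \lim_{k\to\infty}{\cal L}_{F_P}^k(X)$ from any $X \in {\mathbb S}_+^{n+m}$. I would use this representation with a conveniently chosen initialization so that the evaluations for $P'$ and $P$ can be compared directly, rather than trying to compare ${\cal H}(P')$ and ${\cal H}(P)$ through their defining equations, whose closed-loop matrices $A(F')$ and $A(F)$ differ.

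Writing $F' := F_{P'}$, $F := F_P$, and $S := {\cal H}(P)$, the first step is to try to show that $S$ is a \emph{super-solution} for the $F'$-dynamics, i.e.\ ${\cal L}_{F'}(S) \preceq S$. If this holds, then applying ${\cal L}_{F'}$ repeatedly and invoking its monotonicity yields a non-increasing chain $S \succeq {\cal L}_{F'}(S) \succeq {\cal L}_{F'}^2(S) \succeq \cdots \succeq 0$ whose limit is the unique fixed point ${\cal H}(P') = \lim_{k\to\infty}{\cal L}_{F'}^k(S)$, giving ${\cal H}(P') \preceq S = {\cal H}(P)$ exactly as in \cref{lemma:2}. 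Using the fixed-point identity $S = \Lambda + \gamma A(F)^T S A(F)$, the super-solution inequality ${\cal L}_{F'}(S) = \Lambda + \gamma A(F')^T S A(F') \preceq S$ reduces to the single matrix comparison
\[
A(F')^T S A(F') \preceq A(F)^T S A(F).
\]

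The hard part will be establishing this last inequality, and this is exactly where the argument becomes delicate. \cref{lemma:key-result}, applied to $S$, only yields the \emph{one-sided} bound $A(G)^T S A(G) \succeq A(F_S)^T S A(F_S)$ for every gain $G$, i.e.\ it certifies that the greedy gain $F_S := -S_{22}^{-1}S_{12}^T$ of $S$ minimizes $G \mapsto A(G)^T S A(G)$ in the semidefinite order. It thus lower-bounds both $A(F')^T S A(F')$ and $A(F)^T S A(F)$ by the \emph{same} matrix, but it does not by itself order these two quadratic forms relative to each other, since in general neither $F'$ nor $F$ equals $F_S$. Closing this gap is the crux: one must show that the hypothesis $P' \preceq P$ forces $F' = F_{P'}$ to be at least as good as $F = F_P$ when measured against $S = {\cal H}(P)$, which is not an immediate consequence of \cref{lemma:key-result} because the greedy map $P \mapsto F_P$ is not itself monotone. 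I would attempt this either by propagating $P' \preceq P$ through the greedy map to relate $F'$ and $F$ to $F_S$, or by comparing the two evaluation iterations ${\cal L}_{F'}^k(P')$ and ${\cal L}_{F}^k(P)$ term by term, using monotonicity of $\cal T$ (\cref{lemma:3}) and the minimality property at each step; a robust fallback, should the direct comparison resist, is to restrict attention to iterates that are genuine policy values (as produced by \cref{algo:policy-iteration}), where \cref{lemma:2} already supplies the ordering ${\cal H}(P)\preceq P$ along the trajectory. I expect this comparison step to be the main obstacle, as the greedy gains attached to intermediate iterates need not retain any ordering, so a careful analysis of the interplay between the hypothesis $P' \preceq P$ and the one-sided minimality furnished by \cref{lemma:key-result} will be essential.
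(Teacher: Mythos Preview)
Your diagnosis is correct: the comparison $A(F')^T S A(F') \preceq A(F)^T S A(F)$ with $S={\cal H}(P)$ does \emph{not} follow from $P'\preceq P$ together with \cref{lemma:key-result}, and this is exactly where the argument stalls. The paper's proof takes a different and much shorter route: it fixes the single operator ${\cal L}_{F_P}$ (with $F_P=-P_{22}^{-1}P_{12}^T$), observes ${\cal L}_{F_P}^k(P)\preceq {\cal L}_{F_P}^k(P')$ by monotonicity of ${\cal L}_{F_P}$, and passes to the limit. But that step is itself defective: since ${\cal L}_{F_P}$ has the unique fixed point ${\cal H}(P)$, both $\lim_k {\cal L}_{F_P}^k(P)$ and $\lim_k {\cal L}_{F_P}^k(P')$ equal ${\cal H}(P)$, so the limit inequality collapses to ${\cal H}(P)\preceq{\cal H}(P)$ and never produces ${\cal H}(P')$.

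The gap you isolated is in fact unclosable, because the lemma is false as stated. The value ${\cal H}(P)$ depends on $P$ only through the greedy gain $F_P$, and $P'\preceq P$ does not order the corresponding policy values. A scalar counterexample: with $n=m=1$, $a=\tfrac12$, $b=q=r=\gamma=1$, one finds $P^*\approx\begin{bmatrix}1.283&0.566\\0.566&2.133\end{bmatrix}$, while $P=\mathrm{diag}(0.1,1)\in{\cal P}$ satisfies $P\preceq P^*$ and $F_P=0\in{\cal F}_\gamma$, giving ${\cal H}(P)=\begin{bmatrix}4/3&2/3\\2/3&7/3\end{bmatrix}\succeq P^*={\cal H}(P^*)$; hence ${\cal H}(P)\not\preceq{\cal H}(P^*)$. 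Your ``fallback'' of restricting to matrices that are genuine policy values (so that \cref{lemma:2} applies) is therefore the right instinct; fortunately the lemma is not invoked elsewhere in the paper, and \cref{thm:5}--\cref{thm:6} rely only on \cref{lemma:2} and \cref{lemma:key-result}.
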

\begin{proof}
For any $P,P' \in {\mathbb S}_+^{n+m}$ such that $P\preceq P'$, the monotonicity of ${\cal L}_{- P_{22}^{ - 1} P_{12}^T}$ yields
\[
{\cal L}_{- P_{22}^{ - 1} P_{12}^T}^k (P) \preceq {\cal L}_{- P_{22}^{ - 1} P_{12}^T}^k (P').
\]
Then, by taking the limit on both sides, we have ${\cal H}(P) \preceq {\cal H}(P')$.
\end{proof}

Based on the aforementioned results, one can obtain an upper bound on the error, ${\cal H}^k (P ) - P^*$, in terms of the matrix inequality similar to Q-VI.
\begin{theorem}[Upper bound]\label{thm:5}
Suppose that for any given $F \in {\cal F}_\gamma$, $P$ satisfies $P = \Lambda  + \gamma A(F)^T PA(F)$. Then
\[
{\cal H}^k (P ) - P^* \preceq \gamma ^k (A(F^* )^T )^k (P  - P^* )A(F^* )^k
\]
\end{theorem}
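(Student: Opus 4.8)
The plan is to mimic the inductive structure used in the proof of \cref{thm:1}, but with the operator $\cal H$ in place of $\cal T$. The crucial observation is that the iterates of Q-PI are intimately related to those of Q-VI: by \cref{lemma:2}, the hypothesis $P = \Lambda + \gamma A(F)^T P A(F)$ for some $F \in {\cal F}_\gamma$ guarantees $P \succeq {\cal H}(P)$, so $\cal H$ makes genuine progress downward toward $P^*$. I would first establish the single-step bound, namely that for such $P$,
\[
{\cal H}(P) - P^* \preceq \gamma A(F^*)^T (P - P^*) A(F^*),
\]
and then lift it to $k$ steps by induction.

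For the single-step bound, I would write ${\cal H}(P) = \Lambda + \gamma A(G)^T {\cal H}(P) A(G)$ where $G = -P_{22}^{-1} P_{12}^T$ is the greedy gain associated with $P$, and $P^* = \Lambda + \gamma A(F^*)^T P^* A(F^*)$ from \eqref{eq:5}. Subtracting and using \cref{lemma:key-result} (applied to ${\cal H}(P) \in {\cal P}$, whose greedy gain is $G$) to replace $A(G)^T {\cal H}(P) A(G)$ by the larger quantity $A(F^*)^T {\cal H}(P) A(F^*)$, I would obtain
\[
{\cal H}(P) - P^* \preceq \gamma A(F^*)^T \big({\cal H}(P) - P^*\big) A(F^*).
\]
Here the obstacle is that this inequality has ${\cal H}(P) - P^*$ on \emph{both} sides, so it is not immediately a one-step contraction of the desired form. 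The resolution is to iterate the Lyapunov-type inequality: unrolling it yields ${\cal H}(P) - P^* \preceq \gamma^j (A(F^*)^T)^j ({\cal H}(P) - P^*) A(F^*)^j$ for all $j$, and since $\gamma^{1/2} A(F^*)$ is Schur the right side tends to $0$, forcing ${\cal H}(P) \preceq P^*$. Combined with $P^* \preceq P$ (which holds because $P^* = {\cal H}(P^*) \preceq {\cal H}(P) \preceq P$ by monotonicity of $\cal H$ and \cref{lemma:2}), this gives $0 \preceq P - {\cal H}(P)$, but more usefully I need the cleaner bound ${\cal H}(P) - P^* \preceq \gamma A(F^*)^T (P - P^*) A(F^*)$, which follows by replacing ${\cal H}(P)$ with the larger $P$ inside the Lyapunov inequality once we know ${\cal H}(P) \preceq P$.

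With the single-step bound in hand, the induction is routine. Assuming ${\cal H}^{k-1}(P) - P^* \preceq \gamma^{k-1} (A(F^*)^T)^{k-1} (P - P^*) A(F^*)^{k-1}$, I would apply the single-step bound to ${\cal H}^{k-1}(P)$ (which still satisfies the fixed-point hypothesis of \cref{lemma:2}, since each Q-PI iterate solves its own Bellman equation) to pick up one more factor of $\gamma A(F^*)^T (\cdot) A(F^*)$, and then sandwich $\gamma A(F^*)^T ({\cal H}^{k-1}(P) - P^*) A(F^*)$ below the inductive bound by congruence with $A(F^*)$. I expect the main subtlety to be the verification that each iterate ${\cal H}^{k}(P)$ again satisfies a fixed-point relation of the form required to reapply \cref{lemma:2} and \cref{lemma:key-result}; this is where the structure of the policy-evaluation step \eqref{eq:9} must be invoked, ensuring the greedy gain at each stage stays in ${\cal F}_\gamma$ so that $\cal H$ remains well defined throughout.
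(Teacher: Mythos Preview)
Your proposal contains a genuine gap in the single-step bound. You apply \cref{lemma:key-result} to the matrix ${\cal H}(P)=S$ and assert that $G=-P_{22}^{-1}P_{12}^T$ is its greedy gain, so that $A(G)^T S A(G)\preceq A(F^*)^T S A(F^*)$. This is false: $G$ is the greedy gain of $P$, not of $S$; the greedy gain of $S$ is $-S_{22}^{-1}S_{12}^T$, which is precisely the \emph{next} policy $F_{k+1}$ in \cref{algo:policy-iteration} and is generically different from $G=F_k$. Consequently the self-referential inequality
\[
{\cal H}(P)-P^*\preceq \gamma A(F^*)^T\big({\cal H}(P)-P^*\big)A(F^*)
\]
is not established, and your unrolling conclusion ${\cal H}(P)\preceq P^*$ is in fact wrong: by the monotonicity of ${\cal H}$ together with \cref{lemma:2} you yourself derive $P^*={\cal H}(P^*)\preceq{\cal H}(P)$, so if your inequality were valid you would be forced to ${\cal H}(P)=P^*$ for every admissible $P$, which is absurd.

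The fix is simply to reverse the order of the two ingredients you already cite. Starting from ${\cal H}(P)-P^*=\gamma A(G)^T S A(G)-\gamma A(F^*)^T P^* A(F^*)$, first invoke \cref{lemma:2} to replace $S$ by the larger matrix $P$, obtaining $\gamma A(G)^T P A(G)-\gamma A(F^*)^T P^* A(F^*)$. Now $G$ \emph{is} the greedy gain of $P$, so \cref{lemma:key-result} applied to $P$ (not to $S$) gives $A(G)^T P A(G)\preceq A(F^*)^T P A(F^*)$, yielding directly
\[
{\cal H}(P)-P^*\preceq \gamma A(F^*)^T(P-P^*)A(F^*),
\]
with no self-reference and no detour through a false intermediate claim. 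This is exactly the route the paper takes. Your inductive step is then correct as written, since each ${\cal H}^{k}(P)$ satisfies its own Lyapunov equation \eqref{eq:9} and the argument repeats verbatim.
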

\begin{proof}
Suppose that ${\cal H}(P) = S  =\Lambda +  \gamma A( - P_{22}^{ - 1} P_{12}^T )^T SA( - P_{22}^{ - 1} P_{12}^T )$. Using this relation, we have
\begin{align*}
&{\cal H}(P) - {\cal H}(P^* )\\
 =& S - \gamma A(F^* )^T P^* A(F^* )\\
 =& \gamma A( - P_{22}^{ - 1} P_{12}^T )^T SA( - P_{22}^{ - 1} P_{12}^T ) - \gamma A(F^* )^T P^* A(F^* )\\
 \preceq& \gamma A( - P_{22}^{ - 1} P_{12}^T )^T PA( - P_{22}^{ - 1} P_{12}^T ) - \gamma A(F^* )^T P^* A(F^* )\\
 \preceq& \gamma A(F^* )^T PA(F^* ) - \gamma A(F^* )^T P^* A(F^* )\\
 =& \gamma A(F^* )^T (P - P^* )A(F^* ),
\end{align*}
where the first inequality uses ${\cal H}(P) = S \preceq P$ from~\cref{lemma:2}, and the second inequality is due to~\cref{lemma:key-result}. The desired result is obtained using an induction argument similar to the proof of~\cref{thm:convergence1}.
\end{proof}

As in the previous section, an identical upper bound can be obtained for Q-VI. One may imagine that the lower bound can be obtained under the special initial point, $P \in {\cal C}(P^*)$ as in Q-VI case. However, in the policy iteration case, we can prove that the initial $P$ always satisfies $P \in {\cal C}(P^*)$. The benefit comes from the additional knowledge on the initial gain $F_0$ which is stabilizing for $(\gamma^{1/2} A,\gamma^{1/2} B)$. Therefore, a global exponential convergence can be derived for Q-VI as follows.
\begin{theorem}[Global convergence]\label{thm:6}
Suppose that for any given $F \in {\cal F}_\gamma$, $P$ satisfies $P = \Lambda  + \gamma A(F)^T PA(F)$. Then, $P \in {\cal C}(P^*)$, and we have
\[
0 \preceq {\cal H}^k (P ) - P^* \preceq \gamma ^k (A(F^* )^T )^k (P  - P^* )A(F^* )^k,
\]
for all $k \geq 0$. Moreover, we have
\begin{align*}
\| {{\cal H}^k (P) - P^* } \|_{P_\varepsilon ^* }\preceq (\rho (\gamma^{1/2} A(F^*)) + \varepsilon )^{2k} \| {P - P^* } \|_{P_\varepsilon ^* },
\end{align*}
for all $k\geq 0$ and for any $\varepsilon >0$ such that $\rho ( \gamma^{1/2} A(F^*)) + \varepsilon  < 1$ and $P_\varepsilon ^* \in {\mathbb S}^{n+m}_{++}$ is a matrix satisfying the conditions in~\cref{lemma:1} with $\gamma^{1/2} A(F^*)$.
\end{theorem}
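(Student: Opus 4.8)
The plan is to decompose the statement into three parts and dispatch them in order: first the cone membership $P\in{\cal C}(P^*)$, then the two-sided matrix sandwich, and finally the norm bound, which will follow from the sandwich almost verbatim from the proof of~\cref{thm:3}. The entire novelty lives in the first part, since once $P\succeq P^*$ is secured the rest reuses machinery already built for Q-VI.

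The core step is to show $P\succeq P^*$. My starting observation is that $P^*$ is a sub-solution of the Lyapunov map ${\cal L}_F$ attached to the given stabilizing gain $F$. Indeed, comparing~\eqref{eq:F*} and~\eqref{eq:P*} gives $F^*=-(P_{22}^*)^{-1}(P_{12}^*)^T$, so~\cref{lemma:key-result} applied to $P^*\in{\cal P}$ yields $A(F)^T P^* A(F)\succeq A(F^*)^T P^* A(F^*)$ for this $F$. Combining with the fixed-point identity~\eqref{eq:5} gives ${\cal L}_F(P^*)=\Lambda+\gamma A(F)^T P^* A(F)\succeq \Lambda+\gamma A(F^*)^T P^* A(F^*)=P^*$, i.e.\ $P^*\preceq{\cal L}_F(P^*)$. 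I would then iterate: because ${\cal L}_F$ is affine and monotone, $P^*\preceq{\cal L}_F(P^*)$ propagates to a monotonically non-decreasing sequence $\{{\cal L}_F^k(P^*)\}$. Since $F\in{\cal F}_\gamma$ makes $\gamma^{1/2}A(F)$ Schur, the expansion ${\cal L}_F^k(Y)=\sum_{i=0}^{k-1}\gamma^i (A(F)^T)^i\Lambda A(F)^i+\gamma^k(A(F)^T)^k Y A(F)^k$ shows that ${\cal L}_F^k(P^*)$ converges to the unique fixed point of ${\cal L}_F$, which is exactly the hypothesized $P$. A non-decreasing sequence started at $P^*$ and converging to $P$ must satisfy $P\succeq P^*$, so $P\in{\cal C}(P^*)$.

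With cone membership established, the matrix sandwich is quick. The upper bound ${\cal H}^k(P)-P^*\preceq \gamma^k(A(F^*)^T)^k(P-P^*)A(F^*)^k$ is precisely~\cref{thm:5}. For the lower bound I note that $P^*$ is a fixed point of ${\cal H}$, since its defining Lyapunov equation coincides with~\eqref{eq:5} and has a unique solution; hence from $P\succeq P^*$, the monotonicity of ${\cal H}$, and the fact that every iterate stays in ${\cal P}$ (as ${\cal H}(\cdot)\succeq\Lambda$ forces the lower-right block to dominate $R\succ0$), an induction gives ${\cal H}^k(P)\succeq{\cal H}^k(P^*)=P^*$, i.e.\ $0\preceq{\cal H}^k(P)-P^*$. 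For the norm estimate I would apply the monotone norm $\|\cdot\|_{P_\varepsilon^*}$ of~\cref{lemma:monotone-norm} to this sandwich: both ${\cal H}^k(P)-P^*$ and the upper bound are positive semidefinite, so $\|{\cal H}^k(P)-P^*\|_{P_\varepsilon^*}\le\|\gamma^k(A(F^*)^T)^k(P-P^*)A(F^*)^k\|_{P_\varepsilon^*}$, and then submultiplicativity together with inequality~\eqref{eq:1}, namely $\|\gamma^{1/2}A(F^*)\|_{P_\varepsilon^*}\le\rho(\gamma^{1/2}A(F^*))+\varepsilon$, produces the factor $(\rho(\gamma^{1/2}A(F^*))+\varepsilon)^{2k}$ exactly as in~\eqref{eq:2}--\eqref{eq:3}.

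The main obstacle is the second paragraph, establishing $P\in{\cal C}(P^*)$, since the sandwich and the norm bound merely transcribe the Q-VI arguments. The one point demanding care is the convergence claim for ${\cal L}_F^k(P^*)$: one must verify the iteration converges to the \emph{given} $P$ and not to some other matrix, and that the policy-improvement iterates stay stabilizing so that ${\cal H}$ is well defined along the trajectory. Both reduce to the Schur property guaranteed by $F\in{\cal F}_\gamma$ and to $R\succ0$ keeping every iterate inside ${\cal P}$, so I expect no serious difficulty beyond bookkeeping.
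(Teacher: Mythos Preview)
Your proposal is correct, and for the sandwich and norm bounds it mirrors the paper exactly (the paper simply says those parts ``follow the same lines'' as~\cref{thm:convergence1} and~\cref{thm:3} and omits them). The genuine difference is in how you establish $P\in{\cal C}(P^*)$. The paper argues via the optimal-control interpretation: for the given $F\in{\cal F}_\gamma$ one has the cost identity $[x;u]^T P[x;u]=\sum_{k\ge0}\gamma^k z(k;F,x,u)^T\Lambda z(k;F,x,u)$, and optimality of $F^*$ forces this to dominate $Q^*(x,u)=[x;u]^T P^*[x;u]$ for every $(x,u)$, whence $P\succeq P^*$. Your route is purely matrix-analytic: you show $P^*\preceq{\cal L}_F(P^*)$ directly from~\cref{lemma:key-result} and~\eqref{eq:5}, then iterate the monotone affine map ${\cal L}_F$ and pass to the limit using the Schur property of $\gamma^{1/2}A(F)$. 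The paper's argument is shorter and conceptually transparent from a control viewpoint, but it steps outside the semidefinite-cone machinery the rest of the paper is built on; your argument stays entirely inside that framework, reusing only~\cref{lemma:key-result} and the fixed-point identity, and in particular never appeals to the cost interpretation of $P$ or $P^*$. Both are valid; yours is arguably more in keeping with the paper's stated goal of a matrix-inequality analysis.
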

\begin{proof}
The proof of the error bounds follows the same lines as in the proof of~\cref{thm:convergence1} and~\cref{thm:3}, so it is omitted in this paper. We only prove the fact that under the initialization scheme in~\cref{algo:policy-iteration}, $P \in {\cal C}(P^*)$ holds. The initial point, $P$, of the policy iteration satisfies $P = \Lambda  + \gamma A(F)^T PA(F)$ for any given $F \in {\cal F}_\gamma$. Define $z(k;F,x,u)\in {\mathbb R}^{n+m}$ as the augmented state trajectory, $\left[ {\begin{array}{*{20}c}
   {x(k)}  \\
   {u(k)}  \\
\end{array}} \right], k\geq 0$, with initial value $\left[ {\begin{array}{*{20}c}
   {x(0)}  \\
   {u(0)}  \\
\end{array}} \right] = \left[ {\begin{array}{*{20}c}
   x  \\
   u  \\
\end{array}} \right]$ and under the input $u(k) = F x(k)$. Then, from the definition of $F^*$ and using the relation
\[
P = \sum\limits_{k = 0}^\infty  {\gamma ^k (A(F)^T )^k \Lambda A(F)},
\]
we have
\begin{align*}
J(x;F^* ) =& \min _{u \in {\mathbb R}^{m \times n} } \sum\limits_{k = 0}^\infty  {\gamma ^k z(k;F^* ,x,u)^T \Lambda z(k;F^* ,x,u)}\\
 =& \min _{u \in {\mathbb R}^{m \times n} } Q^* (x,u)\\
 =& \min _{u \in {\mathbb R}^{m \times n} } \left[ {\begin{array}{*{20}c}
   x  \\
   u  \\
\end{array}} \right]^T P^* \left[ {\begin{array}{*{20}c}
   x  \\
   u  \\
\end{array}} \right]\\
\le& \sum\limits_{k = 0}^\infty  {\gamma ^k z(k;F,x,\tilde u)^T \Lambda z(k;F,x,\tilde u)}\\
 =& \left[ {\begin{array}{*{20}c}
   x  \\
   {\tilde u}  \\
\end{array}} \right]^T P\left[ {\begin{array}{*{20}c}
   x  \\
   {\tilde u}  \\
\end{array}} \right]
\end{align*}
for any $\tilde u\in {\mathbb R}^{m}$. Therefore, it follows that
\[
\left[ {\begin{array}{*{20}c}
   x  \\
   {\hat u}  \\
\end{array}} \right]^T P^* \left[ {\begin{array}{*{20}c}
   x  \\
   {\hat u}  \\
\end{array}} \right] \le \left[ {\begin{array}{*{20}c}
   x  \\
   {\tilde u}  \\
\end{array}} \right]^T P\left[ {\begin{array}{*{20}c}
   x  \\
   {\tilde u}  \\
\end{array}} \right]
\]
for any $\hat u,\tilde u\in {\mathbb R}^{m}$, or equivalently, $P^* \preceq P$. This completes the proof.
\end{proof}

Contrary to Q-VI, Q-PI always guarantees $P \succeq P^*$, and hence, the exponential convergence in~\cref{thm:6} always holds. Compared to Q-VI, this improvement comes from the additional information on the initially stabilizing gain $F_0 \in {\cal F}_\gamma$. If $F_0 \in {\cal F}_\gamma$ is initially given, then Q-VI can be improved as well. In particular, one can develop the following two phases algorithm: 1) For initial $F \in {\cal F}_\gamma$, find $\tilde P \in {\mathbb S}_{++}^{n+m}$ such that $\tilde P = \Lambda  + I + \gamma A(F)^T \tilde PA(F) =: {\cal D}(\tilde P)$. One can prove that $P \succeq P^*+I$, and ${\cal D}^k (P)$ converges to $\tilde P$ as $k \to \infty$ exponentially fast. Therefore, there exists a finite $k>0$ such that ${\cal D}^k (P) \in {\cal C}(P^*)$. Since there exists an explicit gap between $\tilde P$ and $P^*$, we can find a lower bound on the number of iteration such that ${\cal D}^k (P) \in {\cal C}(P^*)$. 2) Once $P_k = {\cal D}^k(P)$ enters ${\cal C}(P^*)$, then run Q-VI with $P_k$ as an initial parameter. The two phases process guarantees the exponential convergence to $P^*$.

Although the convergence is given in terms of the Q-function parameter $P_k$, if it converges to $P^*$, this implies that the corresponding gain $F_{k + 1}  =  - (P^k_{22})^{ - 1} (P^k_{12})^T$ also converges to $F^*$. Therefore, results in this section also establish the convergence of the gains.
\begin{algorithm}[h]
\caption{Value function based policy iteration (PI)}
\begin{algorithmic}[1]
\State Initialize $F_0 \in {\cal F}_\gamma$.
\For{$k \in \{0,1,\ldots\}$}
\State Solve for $X_k$ the linear equation
\[
X_{k}  = \gamma A^T X_k A - F_k^T (R + \gamma B^T X_k B) F_k + Q
\]

\State Update $F_{k + 1}  =  -(R + \gamma B^T X_k B)^{ - 1} \gamma B^T X_k A$

\EndFor

\end{algorithmic}\label{algo:policy-iteration2}
\end{algorithm}
Finally, the standard value function-based policy iteration (PI) is given in~\cref{algo:policy-iteration2}. As in the previous section, the analysis of Q-PI can be directly extended to PI in~\cref{algo:policy-iteration2} by noting the identity
\begin{align*}
X_k  - X^*  = \left[ {\begin{array}{*{20}c}
   I  \\
   0  \\
\end{array}} \right]^T \{ {\cal H}^k (P) - {\cal H}^k (P^* )\} \left[ {\begin{array}{*{20}c}
   I  \\
   0  \\
\end{array}} \right]
\end{align*}
In the sequel, an example is studied to demonstrate the validity of the analysis given throughout the paper.

\section{Example}
Consider the randomly generated system $(A,B)$
\[
A = \left[ {\begin{array}{*{20}c}
   {0.4527} & {0.9648}  \\
   {0.9521} & {0.6309}  \\
\end{array}} \right],\quad B = \left[ {\begin{array}{*{20}c}
   {0.2871}  \\
   {0.5994}  \\
\end{array}} \right]
\]
and
\[
Q = 0.1\left[ {\begin{array}{*{20}c}
   1 & 1  \\
\end{array}} \right]\left[ {\begin{array}{*{20}c}
   1  \\
   1  \\
\end{array}} \right],\quad R = 100,\quad \gamma =0.9.
\]

The optimal $P^*$ is
\[
P^*  = \left[ {\begin{array}{*{20}c}
   {2604.8} & {2877.2} & {1643.4}  \\
   {2877.2} & {3178.1} & {1815.3}  \\
   {1643.4} & {1815.3} & {2036.9}  \\
\end{array}} \right]
\]
and the corresponding optimal gain is
\[
F^*  = \left[ {\begin{array}{*{20}c}
   {-0.8068} & {-0.8912}  \\
\end{array}} \right]
\]

The spectral radius of $\gamma ^{1/2} A(F^* )$ is $\rho (\gamma ^{1/2} A(F^* )) = 0.7006<1$.
We run Q-VI in~\cref{algo:value-iteration} with the two initial parameters $P_0  = \lambda _{\min } (P^* )I$ so that $P_0\preceq P^*$, and $P_0  = \lambda _{\max } (P^* )I$ so that $P_0\succeq P^*$, where $\lambda _{\min } (P^* ) = 0.0005$ and $\lambda _{\max } (P^* ) = 6992.8$. The evolution of the error, $\left\| {P_k  - P^* } \right\|_2$, of Q-VI for different initial parameter $P_0  = \lambda _{\min } (P^* )I$ (blue line) and $P_0  = \lambda _{\max} (P^* )I$ (red line) are depicted in~\cref{fig:1}(a), and its log-scale plot is given in~\cref{fig:1}(b).
\begin{figure}[t]
\centering\subfigure[]{\includegraphics[width=7cm,height=5cm]{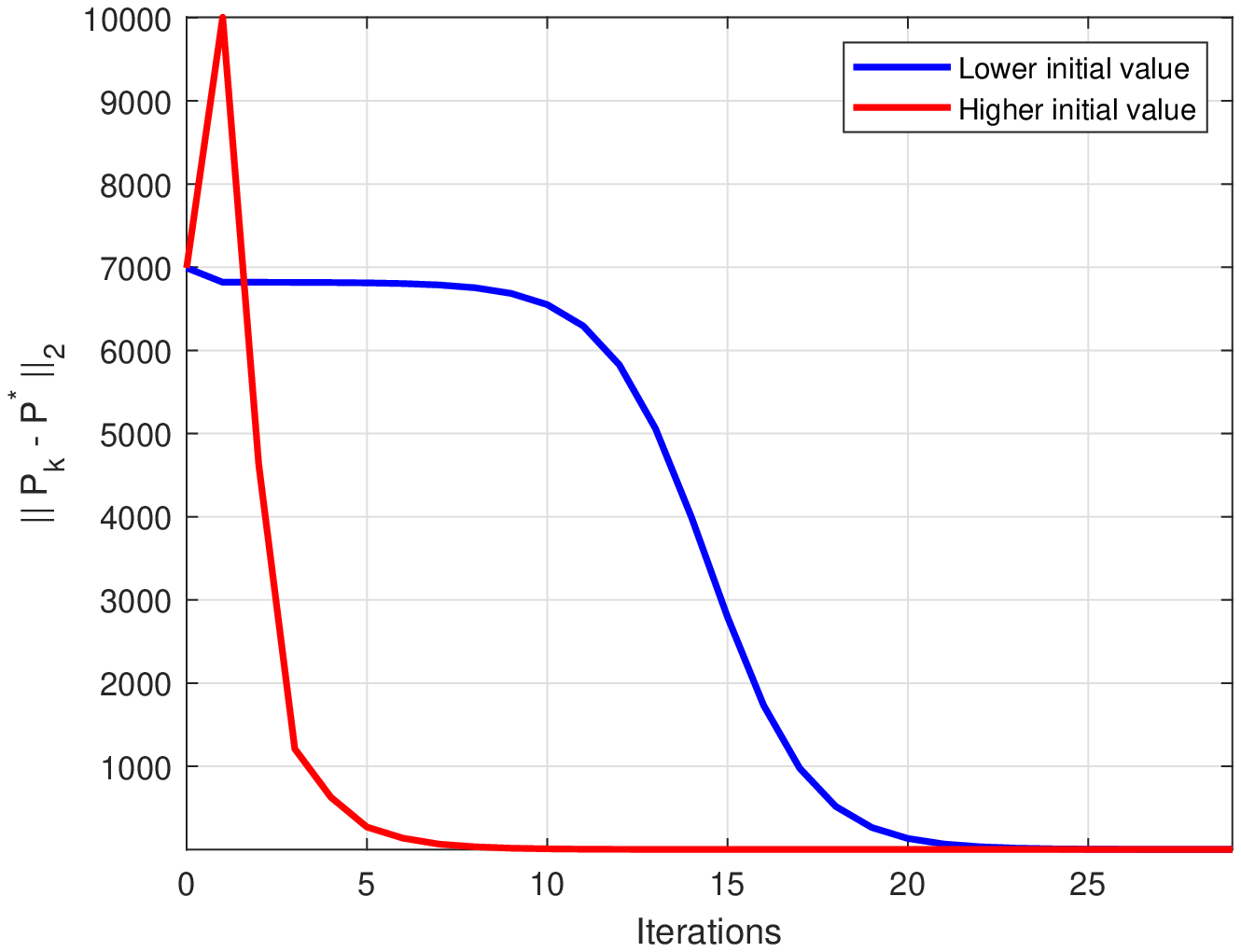}}
\centering\subfigure[]{\includegraphics[width=7cm,height=5cm]{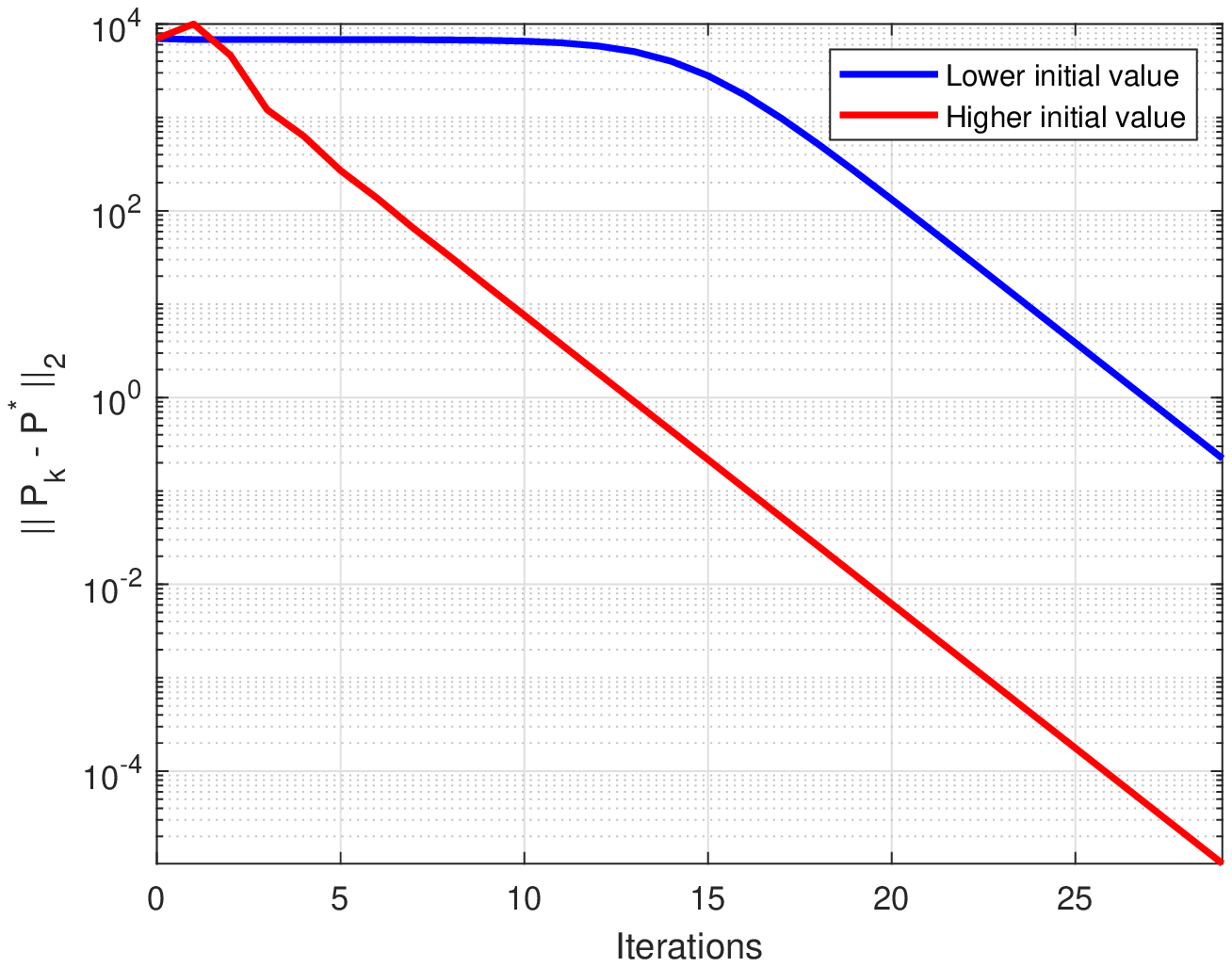}}
\caption{(a) Evolution of error, $\left\| {P_k  - P^* } \right\|_2$, of the Q-value iteration for different initial parameter $P_0  = \lambda _{\min } (P^* )I$ (blue line) and $P_0  = \lambda _{\max} (P^* )I$ (red line). (b) Evolution of error, $\left\| {P_k  - P^* } \right\|_2$, of the Q-value iteration in log-scale for different initial parameter $P_0  = \lambda _{\min } (P^* )I$ (blue line) and $P_0  = \lambda _{\max} (P^* )I$ (red line).}\label{fig:1}
\end{figure}
The figures suggest that the evolution of $\left\| {P_k  - P^* } \right\|_2$ with $P_0 \preceq P^*$ has sublinear convergence, while it has linear (or exponential) convergence with $P_0 \succeq P^*$, which match with the proposed analysis.

We also investigate the evolution of $[P_k  - P^*]_+$, which denotes the projection of the error onto the positive semidefinite cone, and $[P_k  - P^*]_-$, the projection of the error onto the negative semidefinite cone. If $P_0 \preceq P^*$ initially, then $P_k  = {\cal T}^k (P_0 ) \preceq P^*  = {\cal T}^k (P^* )$ for any $k\succeq 0$, and hence, $\left\| {[P_k  - P^*]_+ } \right\|_2 = 0$ for all $k\geq  0$. Therefore, we consider an indefinite initial point by setting
\[
P_0  = \frac{1}{2}(\lambda _{\min } (P^* ) + \lambda _{\max } (P^* ))\left[ {\begin{array}{*{20}c}
   1 & 1 & 1  \\
   1 & 1 & 1  \\
   1 & 1 & 1  \\
\end{array}} \right]
\]
so that neither $P_0  \preceq P^*$ nor $P_0 \succeq P^*$.
\begin{figure}[t]
\centering\includegraphics[width=7cm,height=5cm]{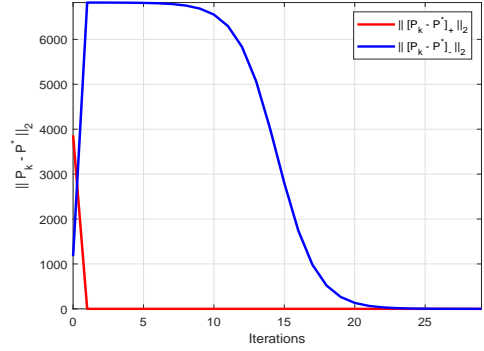}
\caption{Evolutions of $\left\| [{P_k  - P^* }]_+ \right\|_2$ (red line) and $\left\| [{P_k  - P^* }]_- \right\|_2$ (blue line) of the Q-value iteration.}\label{fig:2}
\end{figure}
The evolutions of $\left\| [{P_k  - P^* }]_+ \right\|_2$ (red line) and $\left\| [{P_k  - P^* }]_- \right\|_2$ (blue line) of Q-VI are shown in~\cref{fig:2}, which suggests that the positive semidefinite part, $\left\| [{P_k  - P^* }]_+ \right\|_2$, converges faster with an exponential rate, while the negative semidefinite part, $\left\| [{P_k  - P^* }]_- \right\|_2$, converges with a sublinear rate.
These results empirically demonstrate the theoretical analysis in this paper.

\section*{Conclusion}
In this paper, we have studied the convergence of Q-VI and Q-PI for discrete-time LTI systems. Bounds on errors have been given in terms of both matrix inequalities and matrix norm. In particular, we have proved that Q-VI exponentially converges to the optimal solution if the initial parameter lies in a certain semidefinite cone. A simple analysis of convergence in general cases has also been presented. These results have been then extended to Q-PI. Finally, an example has been given to illustrate the validity of the proposed analysis. Potential future works include analysis for generalized dynamic programming with errors incurred in each update step, extensions to switching linear systems, and analysis for approximate dynamic programming and reinforcement learning algorithms.
\bibliographystyle{IEEEtran}
\bibliography{reference}

\appendices

\section{Proof of~\cref{lemma:monotone-norm}}\label{appendix:1}
We will use the following two lemmas:
\begin{lemma}[{\cite[Chap.~2]{boyd1994linear}}]\label{lemma:4}
For any $A \in {\mathbb S}^{n+m}$, $\lambda_{\max}(A)$ can be characterized by the following optimization
\begin{align*}
&\lambda ^* : = \arginf_{\lambda  \in {\mathbb R}} \lambda\\
&{\rm{subject}}\,\,{\rm{to}}\quad A \prec \lambda I.
\end{align*}
\end{lemma}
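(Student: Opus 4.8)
The plan is to diagonalize $A$ and translate the linear matrix inequality constraint into scalar inequalities on the eigenvalues. Since $A \in {\mathbb S}^{n+m}$ is symmetric, the spectral theorem furnishes an orthogonal matrix $U$ and real eigenvalues $\mu_1,\ldots,\mu_{n+m}$ with $A = U \Lambda_A U^T$, where $\Lambda_A = \mathrm{diag}(\mu_1,\ldots,\mu_{n+m})$ and $\lambda_{\max}(A) = \max_i \mu_i$. First I would note that for every $\lambda \in {\mathbb R}$ the matrix $\lambda I - A$ is orthogonally similar to $\mathrm{diag}(\lambda-\mu_1,\ldots,\lambda-\mu_{n+m})$, since $U^T(\lambda I - A)U = \lambda I - \Lambda_A$. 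A symmetric matrix is positive definite exactly when all of its eigenvalues are positive, so $A \prec \lambda I$, i.e.\ $\lambda I - A \succ 0$, holds if and only if $\lambda - \mu_i > 0$ for all $i$, equivalently $\lambda > \lambda_{\max}(A)$.

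This identifies the feasible set of the program precisely as the open half-line $(\lambda_{\max}(A),\infty)$, after which the conclusion is immediate: the infimum of this half-line is its left endpoint, so $\lambda^* = \lambda_{\max}(A)$, which is the asserted characterization.

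The only delicate point — and the sole step that is not purely mechanical — is the strictness of the inequality defining the constraint. Because the feasible region is specified by $A \prec \lambda I$ rather than $A \preceq \lambda I$, the value $\lambda = \lambda_{\max}(A)$ is itself \emph{infeasible}: at that value $\lambda_{\max}(A)I - A$ carries a zero eigenvalue and is merely positive semidefinite, not positive definite. Consequently the program attains its value only in the limit, and the infimum is not a minimum. I would make this explicit so that $\lambda^*$ is read as the optimal \emph{value} rather than an attained minimizer; with that clarification the equality $\lambda^* = \lambda_{\max}(A)$ follows directly from the spectral decomposition, and no quantitative estimates are required.
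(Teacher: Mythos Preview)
Your argument is correct: diagonalizing $A$ via the spectral theorem reduces $A \prec \lambda I$ to the scalar conditions $\lambda > \mu_i$ for every eigenvalue $\mu_i$, so the feasible set is exactly $(\lambda_{\max}(A),\infty)$ and its infimum is $\lambda_{\max}(A)$. Your remark that the infimum is not attained because the constraint is strict is also accurate, and it is worth flagging, since the paper's use of $\arginf$ is slightly informal in this respect.

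There is nothing to compare against, however: the paper does not supply a proof of this lemma. It is quoted as a standard fact from \cite{boyd1994linear} and then invoked as a tool in the proof of \cref{lemma:monotone-norm}. So your spectral-decomposition argument is not an alternative route but rather the natural (and essentially only) justification one would give for a result the paper simply imports.
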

\begin{lemma}[{\cite[Corollary~7.7.13]{horn2012matrix}}]\label{lemma:5}
Let $A,B \in {\mathbb S}_+^{n+m}$. The following statements are equivalent:
\begin{enumerate}
\item $A\succeq B$

\item $\left[ {\begin{array}{*{20}c}
   A & B  \\
   B & A  \\
\end{array}} \right] \succeq 0$
\end{enumerate}
\end{lemma}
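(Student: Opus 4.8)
The plan is to establish the equivalence of statements (1) and (2) by a single congruence transformation that block-diagonalizes the $2\times 2$ block matrix, so that both implications fall out at once. Writing $M := \begin{bmatrix} A & B \\ B & A \end{bmatrix}$, I would introduce
\[
T := \frac{1}{\sqrt 2}\begin{bmatrix} I & I \\ I & -I \end{bmatrix}
\]
and first record that $T$ is symmetric and orthogonal: a direct block computation gives $T^2 = I$, so that $T = T^T = T^{-1}$ and in particular $T$ is invertible.

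The key step is then the explicit evaluation of the congruence $TMT$. Carrying out the block multiplication $M T$ and then $T(MT)$ yields the block-diagonal matrix
\[
T M T = \begin{bmatrix} A + B & 0 \\ 0 & A - B \end{bmatrix}.
\]
Because $T$ is invertible and symmetric, congruence preserves definiteness, so $M \succeq 0$ holds if and only if $T M T \succeq 0$, which in turn holds if and only if both diagonal blocks are positive semidefinite, i.e. $A + B \succeq 0$ and $A - B \succeq 0$.

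Finally I would invoke the standing hypothesis $A, B \in \mathbb{S}_+^{n+m}$. Since both $A$ and $B$ are positive semidefinite, $A + B \succeq 0$ is automatic, so the two conditions above collapse to the single condition $A - B \succeq 0$, which is exactly $A \succeq B$. This closes the loop: $M \succeq 0 \iff A \succeq B$, establishing the equivalence.

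There is no genuine obstacle in this argument; the only place needing care is the block computation of $TMT$, and the sole role of the hypothesis $A,B \succeq 0$ is to discard the otherwise-needed requirement $A+B \succeq 0$. As a cross-check, one can also argue directly through the quadratic form without the transformation: for $A \succeq B \Rightarrow M \succeq 0$, the bound $x^T A x + 2 x^T B y + y^T A y \ge (x+y)^T B (x+y) \ge 0$ follows from $A \succeq B \succeq 0$; for the converse, evaluating the quadratic form of $M$ at the vector $\begin{bmatrix} x \\ -x \end{bmatrix}$ gives $2\,x^T (A - B) x \ge 0$, hence $A \succeq B$. I would present the congruence route as the primary proof since it delivers both directions simultaneously.
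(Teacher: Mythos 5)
Your proof is correct, but there is nothing in the paper to compare it against: the paper does not prove this lemma at all, importing it verbatim as Corollary~7.7.13 of Horn and Johnson. What you supply is a complete, self-contained elementary substitute, and it checks out. With $T = \frac{1}{\sqrt 2}\begin{bmatrix} I & I \\ I & -I \end{bmatrix}$ one indeed has $T = T^T = T^{-1}$ and
\[
T\begin{bmatrix} A & B \\ B & A \end{bmatrix}T = \begin{bmatrix} A+B & 0 \\ 0 & A-B \end{bmatrix},
\]
and since congruence by an invertible matrix preserves positive semidefiniteness, $M \succeq 0$ holds iff $A+B \succeq 0$ and $A-B \succeq 0$; you correctly identify that the standing hypothesis $B \succeq 0$ (with $A \succeq B$, or with the PSD-ness of the diagonal blocks of $M$) is exactly what discharges the condition $A+B \succeq 0$, so the equivalence collapses to $A \succeq B$. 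Your argument in fact proves the sharper general statement behind the citation: for arbitrary symmetric $B$, $M \succeq 0 \iff A \succeq B$ and $A \succeq -B$, which also shows the lemma's hypothesis $A \in {\mathbb S}_+^{n+m}$ is redundant (it follows from $A \succeq B \succeq 0$, or from $M \succeq 0$ via its diagonal blocks). The quadratic-form cross-check is likewise sound: $x^T A x + 2x^T B y + y^T A y - (x+y)^T B (x+y) = x^T(A-B)x + y^T(A-B)y \ge 0$ gives one direction, and testing $M$ against $\begin{bmatrix} x \\ -x \end{bmatrix}$ gives $2x^T(A-B)x \ge 0$ for the converse. Either route would serve as a valid drop-in replacement for the textbook citation; the congruence route additionally exposes the block-diagonalization structure that makes the equivalence transparent.
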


Consider any $A,B \in {\mathbb S}_+^{n+m}$ such that $A\succeq B$. Then, $\left\| A \right\|_P  \le \left\| B \right\|_P$ is equivalent to $\lambda _{\max } (A^T PA) \le \lambda _{\max } (B^T PB)$. Using~\cref{lemma:4}, the inequality can be cast as $\lambda _A^*  \le \lambda _B^*$, where
\begin{align*}
&\lambda_A ^* : = \arginf_{\lambda  \in {\mathbb R}} \lambda\\
&{\rm{subject}}\,\,{\rm{to}}\quad A^TPA \prec \lambda I
\end{align*}
and
\begin{align*}
&\lambda_B ^* : = \arginf_{\lambda  \in {\mathbb R}} \lambda\\
&{\rm{subject}}\,\,{\rm{to}}\quad B^TPB \prec \lambda I.
\end{align*}
Using the Schur complement~\cite[Chap.~2]{boyd1994linear}, the linear matrix inequality constraints can be equivalently written as
\begin{align}
\left[ {\begin{array}{*{20}c}
   { - \lambda I} & 0  \\
   0 & {-P^{ - 1} }  \\
\end{array}} \right] + \left[ {\begin{array}{*{20}c}
   0 & A  \\
   A & 0  \\
\end{array}} \right] \prec 0\label{eq:6}
\end{align}
and
\begin{align}
\left[ {\begin{array}{*{20}c}
   { - \lambda I} & 0  \\
   0 & {-P^{ - 1} }  \\
\end{array}} \right] + \left[ {\begin{array}{*{20}c}
   0 & B  \\
   B & 0  \\
\end{array}} \right] \prec 0.\label{eq:7}
\end{align}
On the other hand, by~\cref{lemma:5}, $A \preceq B \Leftrightarrow A - B \preceq 0$ implies
\[
\left[ {\begin{array}{*{20}c}
   0 & {B - A}  \\
   {B - A} & 0  \\
\end{array}} \right] \preceq 0 \Leftrightarrow \left[ {\begin{array}{*{20}c}
   0 & B  \\
   B & 0  \\
\end{array}} \right] \preceq \left[ {\begin{array}{*{20}c}
   0 & A  \\
   A & 0  \\
\end{array}} \right]
\]
The above inequality implies that if~\eqref{eq:7} is satisfied, then so is~\eqref{eq:6}. Therefore, it is easy to prove that $\lambda _A^*  \le \lambda _B^*$ holds. This completes the proof.

\section{Proof of~\cref{lemma:1}}\label{appendix:2}
Define
\[
P = \alpha \sum\limits_{k = 0}^\infty  {\left( {\frac{1}{{\rho (A(F)) + \varepsilon }}} \right)^{2k} (A(F)^T )^k A(F)^k }
\]
where $\alpha >0$ is any number. We will prove that it is the Lyapunov matrix stated in~\cref{lemma:1}. From the above definition, it is clear that $P$ satisfies
\[
A(F)^T P A(F) + \alpha I = (\rho (A(F)) + \varepsilon )^2 P
\]
It remains to prove the existence of such a matrix $P$, and the second statement.
We have
\begin{align*}
\lambda _{\max } (P) \le& \alpha \sum\limits_{k = 0}^\infty  {\left( {\frac{1}{{\rho (A(F) ) + \varepsilon }}} \right)^{2k} \lambda _{\max } ((A(F)^T )^k A(F)^k )}\\
=& \alpha \sum\limits_{k = 0}^\infty  {\left( {\frac{1}{{\rho (A(F) ) + \varepsilon }}} \right)^{2k} \left\| {A(F)^k } \right\|_2^2 }\\
\leq & \alpha \sum\limits_{k = 0}^\infty  {\left( {\frac{{\left\| {A(F)^k } \right\|_2^{1/k} }}{{\rho (A(F) ) + \varepsilon }}} \right)^{2k} }\\
=& \alpha \sum\limits_{k = 0}^{N - 1} {\left( {\frac{{\left\| {A(F)^k } \right\|_2^{1/k} }}{{\rho (A(F) ) + \varepsilon }}} \right)^{2k} } \\
+& \alpha \sum\limits_{k = N}^\infty  {\left( {\frac{{\left\| {A(F)^k } \right\|_2^{1/k} }}{{\rho (A(F) ) + \varepsilon }}} \right)^{2k} }
\end{align*}
From the Gelfand's formula~\cite[Corollary~5.6.14,~pp.~349]{horn2012matrix}, there exists a finite $N>0$ such that
\[
\left\| {A(F)^k } \right\|_2^{1/k}  \le \rho (A(F)) + 0.5\varepsilon ,\quad \forall k \ge N
\]
Combining the two inequalities leads to
\begin{align*}
&\lambda _{\max } (P)\\
\le& \alpha \sum\limits_{k = 0}^{N - 1} {\left( {\frac{{\left\| {A_{F}^k } \right\|_2^{1/k} }}{{\rho (A(F) ) + \varepsilon }}} \right)^{2k} }\\
&+ \alpha \sum\limits_{k = N}^\infty  {\left( {\frac{{\rho (A_{F} ) + 0.5\varepsilon }}{{\rho (A(F) ) + \varepsilon }}} \right)^{2k} }\\
=& \alpha \sum\limits_{k = 0}^{N - 1} {\left( {\frac{{\left\| {A(F)^k } \right\|_2^{1/k} }}{{\rho (A(F)  + \varepsilon }}} \right)^{2k} }\\
& + \alpha \left( {\frac{{\rho (A(F) ) + 0.5\varepsilon }}{{\rho (A(F) ) + \varepsilon }}} \right)^{2N} \sum\limits_{k = 0}^\infty  {\left( {\frac{{\rho (A(F) ) + 0.5\varepsilon }}{{\rho (A(F^*) ) + \varepsilon }}} \right)^{2k} }\\
=& \alpha \sum\limits_{k = 0}^{N - 1} {\left( {\frac{{\left\| {A(F)^k } \right\|_2^{1/k} }}{{\rho (A(F) ) + \varepsilon }}} \right)^{2k} }\\
&+ \alpha \left( {\frac{{\rho (A(F) ) + 0.5\varepsilon }}{{\rho (A(F)) + \varepsilon }}} \right)^{2N} \frac{1}{{1 - \left( {\frac{{\rho (A(F) ) + 0.5\varepsilon }}{{\rho (A(F) ) + \varepsilon }}} \right)^2 }}\\
=&\alpha \sum\limits_{k = 0}^{N - 1} {\left( {\frac{{\left\| {A(F)^k } \right\|_2^{1/k} }}{{\rho (A(F) ) + \varepsilon }}} \right)^{2k} }\\
&  + \alpha \left( {\frac{{\rho (A(F) ) + 0.5\varepsilon }}{{\rho (A(F) ) + \varepsilon }}} \right)^{2N} \frac{{\rho (A(F) ) + \varepsilon }}{{0.5\varepsilon }}
\end{align*}
Therefore, $\lambda _{\max } (P)$ is finite. By setting $\alpha$ to be the inverse of the upper bound, we have the desired result.

\section{Proof of~\cref{thm:7}}\label{appendix:3}
Let $P_k  = {\cal T}^k (P)$ and $F_k  =  - P_{k,22}^{ - 1} P_{k,12}^T$. First of all, we have
\begin{align*}
&{\cal T}(P) - {\cal T}(P^* )\\
=& \gamma A( - P_{22}^{ - 1} P_{12}^T )^T PA( - P_{22}^{ - 1} P_{12}^T ) - \gamma A(F^* )^T P^* A(F^* )\\
\succeq& \gamma A( - P_{22}^{ - 1} P_{12}^T )^T P A( - P_{22}^{ - 1} P_{12}^T )\\
& - \gamma A( - P_{22}^{ - 1} P_{12}^T )^T P^* A( - P_{22}^{ - 1} P_{12}^T )\\
=& \gamma A( - P_{22}^{ - 1} P_{12}^T )^T (P - P^* )A( - P_{22}^{ - 1} P_{12}^T ),
\end{align*}
where the inequality is due to~\cref{lemma:key-result}. In general, using ${\cal T}(P^*) = P^*$, it follows that
\begin{align*}
&{\cal T}^{k + 1} (P) - {\cal T}^{k + 1} (P^* )\\
=& \gamma A(F_k )^T {\cal T}^k (P)A(F_k ) - \gamma A(F^* )^T P^* A(F^* )\\
\succeq & \gamma A(F_k )^T {\cal T}^k (P)A(F_k ) - \gamma A(F_k )^T P^* A(F_k )\\
 =& \gamma A(F_k )^T ({\cal T}^k (P) - {\cal T}^k (P^* ))A(F_k )
\end{align*}
for all $k\geq 0$, where the inequality is due to~\cref{lemma:key-result}. By recursively applying the last inequality, one gets
\[
{\cal T}^k (P) - {\cal T}^k (P^* ) \succeq \left( {\prod\limits_{i = 0}^{k - 1} {\gamma ^{1/2} A(F_i )} } \right)^T (P - P^* )\left( {\prod\limits_{i = 0}^{k - 1} {\gamma ^{1/2} A(F_i )} } \right)
\]
Since $P_k \to P^*$ as $k \to \infty$, $F_k \to F^*$ as well. Therefore, for any $\varepsilon_1>0$, there exists a sufficiently large $N_1>0$ such that
\[
\left\| {\gamma ^{1/2} A(F_k )) - \rho (\gamma ^{1/2} A(F^* ))} \right\|_2  \le \varepsilon_1 ,\quad \forall k \ge N
\]
holds, which implies
\[
\left\| {\gamma ^{1/2} A(F_k ))} \right\|_2  \le \left\| {\rho (\gamma ^{1/2} A(F^* ))} \right\|_2  + \varepsilon_1 ,\quad \forall k \ge N
\]
using the inverse triangular inequality. Now, by letting
\[
M = \left( {\prod\limits_{i = 0}^{N_1 - 1} {\gamma ^{1/2} A(F_i )} } \right)^T (P - P^* )\left( {\prod\limits_{i = 0}^{N - 1} {\gamma ^{1/2} A(F_i )} } \right)
\]
it follows that
\begin{align*}
&{\cal T}^{k + N_1} (P) - {\cal T}^{k + N_1} (P^* )\\
 \succeq& \left( {\prod\limits_{i = 1}^k {\gamma ^{1/2} A(F_i )} } \right)^T M\left( {\prod\limits_{i = 1}^k {\gamma ^{1/2} A(F_i )} } \right)\\
 \succeq&  - I\lambda _{\max } (M)\left\| {\prod\limits_{i = 1}^k {\gamma ^{1/2} A(F_i )} } \right\|_2^2\\
 \succeq&  - I\lambda _{\max } (M)\left( {\left\| {(\gamma ^{1/2} A(F^* ))^k } \right\|_2  + \varepsilon_1 } \right)^2\\
 =&  - I\lambda _{\max } (M)\left( {\left( {\left\| {(\gamma ^{1/2} A(F^* ))^k } \right\|_2^{1/k} } \right)^{k}  + \varepsilon_1 } \right)^2
\end{align*}
for all $k\geq 0$. From the Gelfand's formula~\cite[Corollary~5.6.14,~pp.~349]{horn2012matrix}, there exists a finite $N_2\geq 0$ such that
\[
\| {\gamma^{k/2} A(F^*)^k } \|_2^{1/k}  \le \rho (\gamma^{1/2} A(F^*)) + \varepsilon_2 ,\quad \forall k \ge N_2.
\]
for any $\varepsilon_2>0$. Plugging this bound into the previous inequality leads to
\begin{align*}
&{\cal T}^{k + N_1 } (P) - {\cal T}^{k + N_1 } (P^* )\\
\succeq&  - I\lambda _{\max } (M)\{ (\rho (\gamma ^{1/2} A(F^* )) + \varepsilon _2 )^k  + \varepsilon _1 \}^2
\end{align*}
for all $k\geq N_2$. Therefore, the desired result is obtained.
\end{document}